\theoremstyle{plain}
\newtheorem{mainthm}{Theorem}
\newtheorem{maincor}[mainthm]{Corollary}
\newtheorem{thm}{Theorem}[subsection]
\newtheorem{lem}[thm]{Lemma}
\newtheorem{prop}[thm]{Proposition}
\theoremstyle{definition}
\newtheorem{dfn}[thm]{Definition}
\theoremstyle{remark}
\newtheorem{rem}[thm]{Remark}
\theoremstyle{plain}
\newcommand{\Id}{{{\mathchoice {\rm 1\mskip-4mu l} {\rm 1\mskip-4mu l}
      {\rm 1\mskip-4.5mu l} {\rm 1\mskip-5mu l}}}}
\newcommand{\cobto}{\leadsto}
\newcommand{\id}{\textnormal{id}}
\newcommand{\R}{\mathbb{R}}
\newcommand{\Z}{\mathbb{Z}}
\newcommand{\N}{\mathbb{N}}
\newcommand{\C}{\mathbb{C}}
\newcommand{\La}{\Lambda}
\newcommand{\cob}{\mathcal{C}ob}
\newcommand{\fuk}{\mathcal{F}uk}
\newcommand{\mor}{{\textnormal{Mor\/}}}
\newcommand{\en}{\mathcal{E}nd}
\newcommand{\ocaddress}{cornea@dms.umontreal.ca}
\begin{document}

\title[Categorification of Seidel's representation.]{Categorification of Seidel's representation.}
\date{\today}

\thanks{The first author was supported by a University of Montreal End of doctoral studies grant, and TIDY and Raymond and Beverly Sackler fellowships from Tel Aviv University.\\
The second author was supported by an NSERC Discovery grant
  and a FQRNT Group Research grant}

\author{
 Fran\c{c}ois Charette and Octav Cornea}

\address{Fran\c{c}ois Charette, Max Planck Institute for Mathematics, Vivatsgasse 7, 53111 Bonn, Germany} \email{charettf@mpim-bonn.mpg.de}
 \address{Octav Cornea, Department of Mathematics
  and Statistics University of Montreal C.P. 6128 Succ.  Centre-Ville
  Montreal, QC H3C 3J7, Canada} \email{\ocaddress}


%

 \begin{abstract}
Two natural symplectic constructions, the Lagrangian suspension and Seidel's quantum
representation of the fundamental group of the group of Hamiltonian
diffeomorphisms, $Ham(M)$, with $(M,\omega)$ a monotone symplectic
manifold, admit categorifications as actions of the fundamental
groupoid $\Pi(Ham(M))$ on a cobordism category recently introduced
in \cite{Bi-Co:cob2} and, respectively, on a monotone variant of the
derived Fukaya category. We show that the functor constructed in
\cite{Bi-Co:cob2} that maps the cobordism category to the derived
Fukaya category is equivariant with respect to these actions.
\end{abstract}

\maketitle

%
%


\section{Introduction}

Let $(M^{2n},\omega)$ be a symplectic manifold which is closed or tame at infinity.

\

The main purpose of this paper is to  relate two important, basic
constructions in symplectic topology:  Lagrangian suspension \cite{Ar:cob-1,Ar:cob-2},\cite{Aud:calc-cob},\cite{Po:hambook} and
 Seidel's representation $S: \pi_{1}(Ham(M))\to QH(M)$ in its Lagrangian version
\cite{Se:pi1},\cite{Hu-La:Seidel-morph}, \cite{Hu-La-Le:monodromy}.

\

Our first remark is that, after convenient ``categorification'',  these two constructions
can be naturally viewed as  two actions of the fundamental groupoid $\Pi (Ham(M))$
on a cobordism category $\cob^{d}_{0}(M)$ (introduced  in
\cite{Bi-Co:cob1}) and, respectively, on the derived 
Fukaya category $D\fuk^{d}(M)$ of $M$.  We then show that these two actions 
are interchanged
by the functor:
$$\mathcal{F}:\cob^{d}_{0}(M)\to D\fuk^{d}(M)$$
introduced in \cite{Bi-Co:cob1} and \cite{Bi-Co:cob2}. 
In fact, we prove a slightly more refined statement:
 \begin{mainthm} \label{thm:main2}
The following diagram of action bifunctors commutes:
\begin{eqnarray}\label{eq:main-diag2}
   \begin{aligned}
\xymatrix{
    \Pi(Ham(M)) \times \cob^d_0(M) \ar[rrr]^{\Sigma} \ar[d]_{\id \times
\widetilde{\mathcal{F}}} & & & \cob^d_0(M) \ar[d]^{\widetilde{\mathcal{F}}}\\
    \Pi(Ham(M)) \times T^{S}D\fuk^{d}(M) \ar[rrr]^{\widetilde{S}}& & &
T^{S}D\fuk^{d}(M)}
\end{aligned}
\end{eqnarray}
\end{mainthm}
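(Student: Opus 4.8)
The plan is to check the commutation of \eqref{eq:main-diag2} by unwinding the geometric definitions of the two action bifunctors and then comparing the pseudo-holomorphic moduli spaces that underlie them. Since, up to the passage to the derived/triangulated level, all the categories in play are generated by their objects and by elementary morphisms, it suffices to verify the square (i) on objects, (ii) on morphisms of $\cob^d_0(M)$ with a fixed class in $\Pi(Ham(M))$, and (iii) on morphisms of $\Pi(Ham(M))$ with a fixed cobordism, and then to check compatibility with the two compositions and with identities. Write a class in $\Pi(Ham(M))$ as the homotopy class of a path $\gamma=\{\phi_t\}_{t\in[0,1]}$ from $\phi_0$ to $\phi_1$, with generating Hamiltonian $H_t$; throughout it is convenient to replace $\gamma$ by its suspension, the Hamiltonian isotopy $\{\Phi_t\}$ of $M\times\C$ obtained from a horizontally cut-off version of $H_t$, whose time-$1$ map carries the relevant cobordisms and tautologically relates the constructions on the two ends.

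On objects the commutation is essentially definitional: a Lagrangian brane $L$ is sent by both $\widetilde{\mathcal F}\circ\Sigma$ and $\widetilde S\circ(\id\times\widetilde{\mathcal F})$ to the pushforward brane under the relevant diffeomorphism --- on the cobordism side because the suspension of $\gamma$ over the constant cobordism on $L$ is a cobordism between the two translates of $L$ and $\mathcal F$ is the identity on objects, on the Fukaya side because Seidel's functor acts on objects by pushforward --- and the induced identifications of grading, orientation and local system match by construction. For $\gamma$ a constant path at $\psi\in Ham(M)$ the square reduces to the invariance of $\mathcal F$ under a Hamiltonian isotopy of $M$ applied simultaneously to a cobordism and to its ends, which is part of \cite{Bi-Co:cob1, Bi-Co:cob2}. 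The multiplicativity needed in the $\Pi(Ham(M))$-variable, i.e.\ $S(\gamma_2\ast\gamma_1)=S(\gamma_2)S(\gamma_1)$ and its categorified form, is the corresponding property of Seidel's representation \cite{Se:pi1, Hu-La:Seidel-morph, Hu-La-Le:monodromy}, together with the fact that the suspension of a concatenation of paths is equivalent, as a cobordism, to the concatenation of the suspensions.

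The core of the argument is case (ii), for a fixed $\gamma$ and a cobordism $V\colon L\cobto(L_1,\dots,L_k)$. By the main construction of \cite{Bi-Co:cob2}, $\mathcal F(V)$ is a specific iterated cone of $L$ in terms of $L_1,\dots,L_k$ whose structure maps are counts of $J$-holomorphic strips and polygons with boundary on $V$, read off through the projection $M\times\C\to\C$. Applying $\Sigma(\gamma,\cdot)$ produces the suspended cobordism $\Sigma_\gamma V\subset M\times\C$, whose ends are the translates of the $L_i$, and $\mathcal F(\Sigma_\gamma V)$ is governed by the analogous moduli spaces on $\Sigma_\gamma V$. The key geometric point is that, for an almost complex structure on $M\times\C$ compatible with the projection, these moduli spaces for $\Sigma_\gamma V$ are in energy- and index-preserving bijection with moduli spaces of polygons for $V$ in $M$ whose boundary conditions have been deformed along the path $\gamma$: after the change of gauge provided by $\{\Phi_t\}$, a curve with boundary on $\Sigma_\gamma V$ becomes a section-type object for the fibration $M\times\C\to\C$ whose fibre datum is such a $\gamma$-deformed polygon in $M$. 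But counting exactly those deformed polygons is the definition of the Lagrangian Seidel operation acting on the Floer complexes $CF(L_i,L_j)$, hence on the cone decomposition $\mathcal F(V)$; carried out term by term this identifies $\mathcal F(\Sigma_\gamma V)$ with $\widetilde S(\gamma,\mathcal F(V))$, the Seidel-type weights appearing through the Novikov variables and reducing to the class $S(\gamma)\in QH(M)^\times$ when $\gamma$ is a loop.

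I expect the main obstacle to be precisely this moduli-space comparison, which is an analytic rather than a formal matter: one must control pseudo-holomorphic curves in the non-compact total space $M\times\C$ --- ruling out escape to infinity and classifying the possible boundary degenerations --- and match them, with all signs and Novikov bookkeeping, to the $\gamma$-deformed curves in $M$, whether by the explicit gauge change above or by a neck-stretching/continuation argument in $M\times\C$. A secondary, more combinatorial difficulty is coherence: one must verify that the resulting identifications respect gluing of cobordisms, the associativity constraints of the cone decompositions, and the passage to the derived level --- this last being the role of the auxiliary category $T^S D\fuk^d(M)$, which absorbs the shift and Novikov-twist ambiguities intrinsic to Seidel's construction so that $\widetilde S$ is a strict action and \eqref{eq:main-diag2} commutes on the nose rather than merely up to coherent isomorphism.
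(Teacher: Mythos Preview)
Your overall strategy --- reduce to objects, to morphisms in each factor, and verify compatibility --- is reasonable, and your diagnosis that the analytic heart lies in comparing curves on the suspended cobordism with $\gamma$-deformed curves in $M$ is correct. But the execution diverges from the paper in a way that both misses a key simplification and overstates the geometric comparison.

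The paper first reformulates the bifunctor commutativity as the commutativity of a square of functors into endomorphism categories, so that what you call cases (ii) and (iii) collapse to the single equation $\widetilde{S}(\mathbf{g})\circ\widetilde{\mathcal{F}}=\widetilde{\mathcal{F}}\circ\Sigma(\mathbf{g})$ of natural transformations. Equality of natural transformations is checked on objects; since $\widetilde{\mathcal{F}}$ is monoidal and agrees with its linearization $\mathcal{F}$ on elementary cobordisms, the whole problem reduces to a \emph{single} Lagrangian $L$ and the elementary suspension $V=\Sigma(\mathbf{g})(L)=([0,1]\times L)^{\mathbf{g}^{-1}}$. One then only needs the cobordism morphism $\phi_V:CF(g_0 L,g_0 L)\to CF(g_0 L,g_1 L)$ and the moving-boundary morphism $\phi_{\mathbf{g}}$ (which computes $\xi^0_{\mathbf{g}}=\widetilde{S}(\mathbf{g})(L)$) to agree in homology. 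This is Lemma~\ref{lem:moving}: one interpolates via a one-parameter family of (non-horizontal) embeddings $V_s$ between the trivial cobordism and $V$, builds a moving-boundary Floer map $\eta_\Psi$ in $\C\times M$, and reads the desired chain homotopy off its $2\times 2$ matrix form.

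Your plan instead attacks a general cobordism $V:L\cobto(L_1,\dots,L_k)$ together with a nontrivial $\gamma$, and posits an energy- and index-preserving \emph{bijection} between curves on $\Sigma_\gamma V$ and $\gamma$-deformed curves on $V$, obtained by undoing the suspension gauge $\Phi_t$. This is too strong. Undoing the gauge does carry boundary-on-$\Sigma_\gamma V$ curves to boundary-on-$V$ curves, but for the \emph{transported} almost complex structure and perturbation data; you land neither in the moduli spaces defining $\widetilde{\mathcal{F}}(V)$ nor in those defining the Seidel action on $\widetilde{\mathcal{F}}(V)$. The honest comparison is a chain-homotopy statement, and the interpolation $V_s$ is precisely the device that produces it. Without that (or an equivalent continuation argument) step (ii) has a gap; and even if you filled it for general $V$, you would be doing far more work than needed --- the paper's categorical reduction to a single $L$ and an elementary $V$ bypasses the combinatorics of iterated cones entirely.
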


The category $T^{S}D\fuk^{d}(M)$ is a category associated to 
$D\fuk^{d}(M)$ by a purely algebraic process \cite{Bi-Co:cob2}. 
The morphisms in this category reflect the ways in
which the objects in $D\fuk^{d}(M)$ can be decomposed by iterated
exact triangles in $D\fuk^{d}(M)$ (which, we recall, is a
triangulated category). 
The functor $\widetilde{\mathcal{F}}$ - constructed in \cite{Bi-Co:cob2} -
is a refinement of $\mathcal{F}$ that ``sees'' the triangulated structure of $D\fuk^{d}(M)$.
The decorations ${\ }^{d}$ and ${\ }_{0}$ come from certain constraints that need 
to be imposed on the class of Lagrangians in use so that all these notions are well defined.

In the diagram, $\Sigma$ represents the action of $\Pi(Ham(M))$ on
$\cob^{d}_{0}(M)$ and is an extension of Lagrangian suspension.  
The action of $\Pi(Ham(M))$ on $T^{S}D\fuk^{d}(M)$ is denoted by
$\widetilde{S}$ and is a refinement, first to the derived Fukaya
category  $D\fuk^{d}(M)$ and then to $T^{s}D\fuk^{d}(M)$, of the
Lagrangian Seidel representation.

\

There is yet another perspective on the  commutativity in (\ref{eq:main-diag2})
 that is based on the equivalent definition of the action of
a (strict) monoidal category $\mathcal{M}$ on a category $\mathcal{C}$ as a strict
 monoidal functor $\mathcal{M}\to \en (\mathcal{C},\mathcal{C})$.
The commutativity of diagram (\ref{eq:main-diag2}) is then
equivalent to the commutativity of the bottom square of diagram
(\ref{eq:main-diag}) below:

\begin{maincor} \label{thm:main}
The following diagram of categories and functors commutes:
\begin{eqnarray}\label{eq:main-diag}
   \begin{aligned}
   \xymatrix{
     \pi_{1}(Ham(M))\ar[rrr]^{S} \ar[d]_{i}
     & &  &QH(M)^{\ast}\ar[d]^{\ast}\\
   \Pi (Ham(M))\ar[d]_{\Sigma}\ar[rrr]^{\widetilde{S}} & & &\en (T^{S}D\fuk^{d}(M))\ar[d]^{\widetilde{\mathcal{F}}^{\ast}}\\
    \en(\cob^{d}_{0}(M))\ar[rrr]_{\widetilde{\mathcal{F}}_{\ast}}& & & fun (\cob^{d}_{0}(M), T^{S}D\fuk^{d}(M))}
\end{aligned}
\end{eqnarray}
The categories and functors in the top square are strict monoidal as is the functor $\Sigma$.
\end{maincor}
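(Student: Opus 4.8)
The plan is to derive Corollary~\ref{thm:main} from Theorem~\ref{thm:main2} via the standard correspondence between an action of a strict monoidal category and a strict monoidal functor to an endofunctor category, supplemented by one geometric input for the upper square. Recall that if a strict monoidal category $\mathcal{M}$ acts on a category $\mathcal{C}$ through a bifunctor $\mathcal{A}:\mathcal{M}\times\mathcal{C}\to\mathcal{C}$ satisfying the usual unit and associativity constraints, then currying, $m\mapsto\mathcal{A}(m,-)$, produces a strict monoidal functor $\widehat{\mathcal{A}}:\mathcal{M}\to\en(\mathcal{C},\mathcal{C})$ for the composition monoidal structure on $\en(\mathcal{C},\mathcal{C})$, and every strict monoidal functor into $\en(\mathcal{C},\mathcal{C})$ arises this way from a unique action. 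Applied to the action bifunctors $\Sigma$ and $\widetilde{S}$ of Theorem~\ref{thm:main2}, this yields precisely the functors $\Sigma:\Pi(Ham(M))\to\en(\cob^{d}_{0}(M))$ and $\widetilde{S}:\Pi(Ham(M))\to\en(T^{S}D\fuk^{d}(M))$ of diagram~(\ref{eq:main-diag}) (denoted by the same symbols), while the right-hand verticals of its lower square are post-composition $\widetilde{\mathcal{F}}_{\ast}:F\mapsto\widetilde{\mathcal{F}}\circ F$ and pre-composition $\widetilde{\mathcal{F}}^{\ast}:G\mapsto G\circ\widetilde{\mathcal{F}}$ with the functor $\widetilde{\mathcal{F}}:\cob^{d}_{0}(M)\to T^{S}D\fuk^{d}(M)$ of \cite{Bi-Co:cob2}.

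Granting this, commutativity of the lower square of~(\ref{eq:main-diag}) is a direct restatement of Theorem~\ref{thm:main2}. Indeed, commutativity of~(\ref{eq:main-diag2}) says that $\widetilde{\mathcal{F}}(\Sigma(\phi,V))=\widetilde{S}(\phi,\widetilde{\mathcal{F}}(V))$ for all objects $\phi$ of $\Pi(Ham(M))$ and $V$ of $\cob^{d}_{0}(M)$, compatibly with morphisms in both arguments; curried in $V$ this becomes $\widetilde{\mathcal{F}}\circ\Sigma(\phi)=\widetilde{S}(\phi)\circ\widetilde{\mathcal{F}}$, whose naturality in $\phi$ is precisely the equality of functors $\widetilde{\mathcal{F}}_{\ast}\circ\Sigma=\widetilde{\mathcal{F}}^{\ast}\circ\widetilde{S}$. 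The only supplementary point is well-definedness of the four functors involved, which comes down to $\widetilde{\mathcal{F}}$ being a functor \cite{Bi-Co:cob2} and to $\en(\cob^{d}_{0}(M))$ and $\en(T^{S}D\fuk^{d}(M))$ being strict monoidal categories under composition; this is bookkeeping.

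The upper square contains the one non-formal ingredient. There $i$ identifies $\pi_{1}(Ham(M))$ with the endomorphism monoid of the object $\id_{M}\in\Pi(Ham(M))$ (composition being concatenation of based loops), $QH(M)^{\ast}$ is the group of units of the quantum homology ring, and $\ast$ sends such a unit to the corresponding element of $\en(T^{S}D\fuk^{d}(M))$, namely quantum multiplication by it transported to $T^{S}D\fuk^{d}(M)$ via the construction of \cite{Bi-Co:cob2}. One must show $\widetilde{S}\circ i=\ast\circ S$. Unwinding $\widetilde{S}$ on a based loop $g$: by construction it is the refinement to $T^{S}D\fuk^{d}(M)$ of the Lagrangian Seidel representation on $D\fuk^{d}(M)$, which on loops fixes every Lagrangian up to Hamiltonian isotopy and twists the Floer morphism spaces by the Seidel element; meanwhile $\ast\circ S$ is multiplication by the Seidel class $S(g)\in QH(M)^{\ast}$. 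The identification then expresses the known compatibility of the Lagrangian and quantum Seidel morphisms through the quantum module (closed--open) structure \cite{Se:pi1,Hu-La:Seidel-morph,Hu-La-Le:monodromy}, together with its inheritance by the purely algebraic passage from $D\fuk^{d}(M)$ to $T^{S}D\fuk^{d}(M)$; I expect this inheritance to be the delicate step.

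Finally, the strict monoidality assertions. As one-object categories, $\pi_{1}(Ham(M))$ is strict monoidal by Eckmann--Hilton (concatenation of based loops equals pointwise multiplication, and both are commutative) and $QH(M)^{\ast}$ by the ring axioms; $\Pi(Ham(M))$ is strict monoidal under pointwise multiplication of paths, inherited from the group structure of $Ham(M)$, and $\en(T^{S}D\fuk^{d}(M))$ under composition. The functor $i$ is strict monoidal -- it carries the unit object to $\id_{M}$ and a based loop to itself, and Eckmann--Hilton makes this respect the tensor products -- $S$ is strict monoidal as a group homomorphism, and $\ast$ is strict monoidal since quantum multiplication is associative, commutative and unital. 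The functor $\Sigma$ is strict monoidal because the suspension of a cobordism along a pointwise product $g\cdot h$ of paths in $Ham(M)$ is the composite of the suspensions along $g$ and $h$, directly from the suspension construction in $\cob^{d}_{0}(M)$, and the constant path yields the identity functor; $\widetilde{S}$ is strict monoidal, being the curried form of the (strict) action $\widetilde{S}$ -- equivalently, the Lagrangian Seidel representation carries a product of loops to the composite autoequivalence -- and restricting along the monoidal functor $i$ then makes all functors of the upper square strict monoidal. Given Theorem~\ref{thm:main2}, the categorical formalism and these monoidality checks are routine, so the only place requiring genuine input is the upper-square identity $\widetilde{S}\circ i=\ast\circ S$ and, within it, its persistence under the $T^{S}(-)$ construction.
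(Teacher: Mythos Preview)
Your approach is essentially the same as the paper's: derive the bottom square from Theorem~\ref{thm:main2} via the currying equivalence between actions and monoidal functors to endofunctor categories, and treat the top square as the one place requiring genuine geometric input. The paper makes the same reduction explicitly at the start of \S\ref{subsec:bottom} and proves the top square in \S\ref{subsec:top}.

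One correction of emphasis: you write that the inheritance of the top-square identity under the passage $D\fuk^{d}(M)\rightsquigarrow T^{S}D\fuk^{d}(M)$ is ``the delicate step''. In the paper this is purely formal (Remark~\ref{rem:general-non-sense0}~b): any triangulated endofunctor lifts to $T^{S}$, and any natural \emph{isomorphism} between such functors lifts as well; since the Seidel class is invertible, the relevant natural transformations are isomorphisms and the lift is automatic. The actual content of the top square is the identity $[\xi_{\mathbf{g}}]=[\eta_{S(\mathbf{g})}]$ at the level of $H\,fun(\fuk^{d}(M),\fuk^{d}(M))$, and the paper does not simply cite this from \cite{Hu-La-Le:monodromy} (which only treats the Donaldson-category version) but sketches a proof: one replaces the interior marked point by an interior puncture asymptotic to a Hamiltonian orbit, compares $\xi_{\mathbf{g}}$ to $\tilde{\xi}_{PSS([M]),\mathbf{g}}$ and $\eta_{\alpha}$ to $\tilde{\eta}_{PSS(\alpha)}$ via PSS-interpolation, and then matches the two by the naturality change of variables $u(z)\mapsto g^{-1}_{a(z)}(u(z))$. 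Your citation to the literature is therefore a bit too optimistic; the $A_\infty$-level statement needs this argument.
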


Here $QH(M)^{\ast}$ are the invertible elements in the quantum
homology of $M$, the functor $S$ is Seidel's representation \cite{Se:pi1} viewed
as a monoidal functor and the action $\ast$ is a refinement of the
module action of quantum homology on Lagrangian Floer homology
\cite{Bi-Co:rigidity}. The functors $\widetilde{\mathcal{F}}_{\ast}$
and $\widetilde{\mathcal{F}}^{\ast}$ are  induced respectively by
composition and pre-composition with $\widetilde{\mathcal{F}}$.

\begin{rem}\label{rem:open-closed} A simpler form of diagram (\ref{eq:main-diag}) is yet another commutative diagram of
monoidal categories and functors, of the same shape,  but with the
more familiar $D\fuk^{d}(M)$ taking the place of
$T^{S}D\fuk^{d}(M)$. Further, $QH(M)$ replaces $QH(M)^{\ast}$ in the
upper right corner and $\mathcal{F}$, a linearization of the functor
$\widetilde{\mathcal{F}}$, takes the place of
$\widetilde{\mathcal{F}}$ in this simplified diagram.  In this form,
the action $\ast$ is closely related to a map sometimes called the
{\em closed-open map},  $\mathcal{CO}$. More precisely, $\ast$ is
the composition
\begin{eqnarray}\label{eq:HH}QH(M)  \stackrel{\mathcal{CO}}{\longrightarrow} HH(\fuk^{d}(M),\fuk^{d}(M))=H(\mor_{\en(\fuk^{d}(M))}(\id,\id)) \to \\ \nonumber \to \en (D\fuk^{d}(M)~.~
\end{eqnarray}
Here $HH(-,-)$ stands for Hochschild co-homology
\cite{Se:Homological}. A definition of this co-homology is provided
by the equality in (\ref{eq:HH}) above.  $\en(\fuk^{d}(M))$ stands
for the $A_{\infty}$-category of endofunctors of $\fuk^{d}(M)$ and
the last arrow in the composition is the restriction of the
canonical functor $H(\en(\fuk^{d}(M))\to \en(D\fuk^{d}(M))$. This
simplified diagram is of interest in itself even if it does not take
into account explicitly the triangulation of $D\fuk^{d}(M)$.
\end{rem}

The plan of the paper is as follows. In \S\ref{sec:ingred} we
describe the ingredients in diagram (\ref{eq:main-diag}).  This part
consists mostly of recalls  but there are also a few constructions
that seem not to be present explicitly in the literature and thus we
spend more time on these points. In particular, we show that the
usual notion of Lagrangian suspension extends to the action $\Sigma$
on the cobordism category. We also give a few details to justify
why the closed-open map gives rise to an action.  

The main part of
the paper is in \S\ref{sec:commut}. In \S\ref{subsec:bottom} we
verify the commutativity in diagram (\ref{eq:main-diag2}) and thus
we prove Theorem \ref{thm:main2}. There are two main ingredients in this  proof. The first consists in
some algebraic manipulations based on the properties of the functor $\widetilde{\mathcal{F}}$
from \cite{Bi-Co:cob2}. The second ingredient -  that first appeared in the first author's
thesis \cite{Cha:thesis} -  is the main geometric novelty brought  by the paper. 
Its content is as follows.  Given two Lagrangians $L$, $L'$ (under the usual technical
constraints) and  a path ${\bf g}_{t}$  of Hamiltonian diffeomorphisms there is
a natural ``moving boundary'' morphism
 $$CF(g_{0}(L), L')\to CF(g_{1}(L), L')$$ that induces in homology 
 Seidel's  Lagrangian morphism.  On the other hand,  
 the Lagrangian suspension of $L$ with respect to ${\mathbf{g}}$ is a cobordism from $g_{0}(L)$
 to $g_{1}(L)$. The functor $\widetilde{\mathcal{F}}$ associates to such a cobordism
 another morphism $CF(g_{0}(L),L')\to CF(g_{1}(L),L')$. The key geometric point is that these two
 morphisms are chain-homotopic.  
 
 In \S\ref{subsec:top} we sketch
the proof of the commutativity of the top square in diagram
(\ref{eq:main-diag}). This commutativity is known in various
settings by experts but  we include the  argument here so as to
complete the proof of Corollary \ref{thm:main}.

 The last section,
\S\ref{sec:examples}, mainly contains a few examples. We show how
to use the commutativity of (\ref{eq:main-diag}) to produce examples of cobordisms
with identical ends but that are not horizontally Hamiltonian isotopic.  In particular, we produce
Lagrangians $L\subset M$ and cobordisms $V\subset \C\times M$ diffeomorphic to $\R\times L$,
that coincide with $\R\times L$
away from a compact set, but are not Hamiltonian isotopic to $\R\times L$ even through
isotopies that are not compactly supported and ``slide'' the ends along themselves.
 Thus, we produce examples $L$ so that the set $\mor_{\cob^{d}_{0}}(L,L)$ is not trivial.  In our examples $L$ is a real Lagrangian in a toric symplectic manifold and we can also provide a lower bound on
the size of $\mor_{\cob^{d}_{0}}(L,L)$.
Finally, for the convenience of the reader, in \S\ref{subsubsec:flavors-Seidel}
we make explicit the equivalent descriptions of the Seidel morphism that appear 
in a variety of  sources in the literature as well as in the paper itself.

\

\noindent {\bf Acknowledgements.} We thank Jean-Fran\c{c}ois
Barraud and Paul Biran, for useful
discussions and Cl\'ement Hyvrier for sharing with us an early
version of his work \cite{Hy:circle}. The first author also thanks
Luis Haug and Yaron Ostrover for useful input.


\section{The ingredients in diagram (\ref{eq:main-diag}).}\label{sec:ingred}

\subsection{The categories in the diagram}\label{subsec:categ-diag}
\subsubsection{The Hamiltonian diffeomorphisms group.}
The Hamiltonian diffeomorphisms group of $M$ is denoted by $Ham(M)$
and $\pi_{1}(Ham(M))$ is its fundamental group, the base point is
the identity.  The group $\pi_{1}(Ham(M))$ is viewed here as a
category with one element and so that each element of the group is a
morphism from that element to itself. This category is monoidal in
an obvious way. The fundamental groupoid of $Ham(M)$, denoted by
$\Pi (Ham(M))$, is a category having as objects the Hamiltonian
diffeomorphisms of $M$ and the morphisms are homotopy classes of
paths in $Ham(M)$ relating two such diffeomorphisms.
\subsubsection{Quantum Homology.}
The category $QH(M)^{\ast}$ has a single object and its morphisms are the invertible elements
in the quantum homology of $M$. The composition is the quantum product.
\subsubsection{The cobordism category.}
This subsection contains a summary of the construction of
the  category $\cob^{d}_{0}(M)$  following \cite{Bi-Co:cob2}.

\

\noindent \underline{A. Monotonicity assumptions.} All families of
Lagrangian submanifolds in our constructions have to satisfy a
monotonicity condition in a uniform way as described below. Given a
Lagrangian submanifold $L \subset M$, let
$$\omega : \pi_{2}(M,L)\to \R \ , \ \mu:\pi_{2}(M,L)\to \Z$$
be the morphisms given respectively by integration of $\omega$ and
by the Maslov index. The Lagrangian $L$ is \emph{monotone} if there
exists a positive constant $\rho>0$ so that for all $\alpha\in
\pi_{2}(M,L)$ we have $\omega(\alpha)=\rho\mu(\alpha)$ and the
minimal Maslov number $N_{L}=\min\{\mu(\alpha) : \alpha\in
\pi_{2}(M,L) \ ,\ \mu(\alpha)>0\}$ satisfies $N_{L}\geq 2$. We work
at all times over $\mathbb{Z}_2$ as ground ring.

For a  closed, connected, monotone Lagrangian $L$ there is an associated
invariant $d_{L}\in \Z_{2}$ given
as the number (mod $2$) of $J$-holomorphic disks of Maslov
index $2$ going through a generic point $P\in L$ for $J$ a generic
almost complex structure that is compatible with $\omega$.

A family of Lagrangian submanifolds $L_{i}$, $i\in I$, is uniformly
monotone if each $L_{i}$ is monotone and the following condition is
satisfied: there exists $d\in K$ so that for all $i\in I$ we have
$d_{L_{i}}=d$ and there exists a positive real constant $\rho$ so
that the monotonicity constant of $L_{i}$ equals $\rho$ for all
$i\in I$. All the Lagrangians used in the paper will be assumed
monotone and, similarly, the Lagrangian families will be assumed
uniformly monotone. For $d\in \Z_{2}$ and $\rho > 0$, we let
$\mathcal{L}_{d}(M)$ be the family of closed, connected Lagrangian
submanifolds $L\subset M$ that are monotone with monotonicity
constant $\rho$ and with $d_{L}=d$.

\

\noindent\underline{ B. Cobordism.}
The plane $\mathbb{R}^2$  is endowed with the symplectic
structure $\omega_{\mathbb{R}^2} = dx \wedge dy$, $(x,y) \in
\mathbb{R}^2$.  The product $\widetilde{M}=\mathbb{R}^2 \times M$ is endowed with
the symplectic form $\omega_{\mathbb{R}^2} \oplus \omega$. We denote by
$\pi: \mathbb{R}^2 \times M \to \mathbb{R}^2$ the projection. For a
subset $V \subset \mathbb{R}^2 \times M$ and $S \subset \mathbb{R}^2$
we let $V|_{S} = V \cap \pi^{-1}(S)$.

\begin{dfn}\label{def:Lcobordism}
   Let $(L_{i})_{1\leq i\leq k_{-}}$ and $(L'_{j})_{1\leq j\leq
     k_{+}}$ be two families of closed Lagrangian submanifolds of
   $M$. We say that these two (ordered) families are Lagrangian
   cobordant, $(L_{i}) \simeq (L'_{j})$, if there exists a smooth
   compact cobordism $(V;\coprod_{i} L_{i}, \coprod_{j}L'_{j})$ and a
   Lagrangian embedding $V \subset ([0,1] \times \mathbb{R}) \times M$
   so that for some $\epsilon >0$ we have:
   \begin{equation} \label{eq:cob_ends}
      \begin{aligned}
         V|_{[0,\epsilon)\times \mathbb{R}} = & \coprod_{i}
         ([0, \epsilon) \times \{i\})  \times L_i \\
         V|_{(1-\epsilon, 1] \times \mathbb{R}} =
         & \coprod_{j} ( (1-\epsilon,1]\times \{j\}) \times L'_j~.~
      \end{aligned}
   \end{equation}
   The manifold $V$ is called a Lagrangian cobordism from the
   Lagrangian family $(L'_{j})$ to the family $(L_{i})$. We
   denote it by $V:(L'_{j}) \cobto (L_{i})$ or
   $(V; (L_{i}), (L'_{j}))$.
\end{dfn}
\begin{figure}[htbp]
   \begin{center}
      \epsfig{file=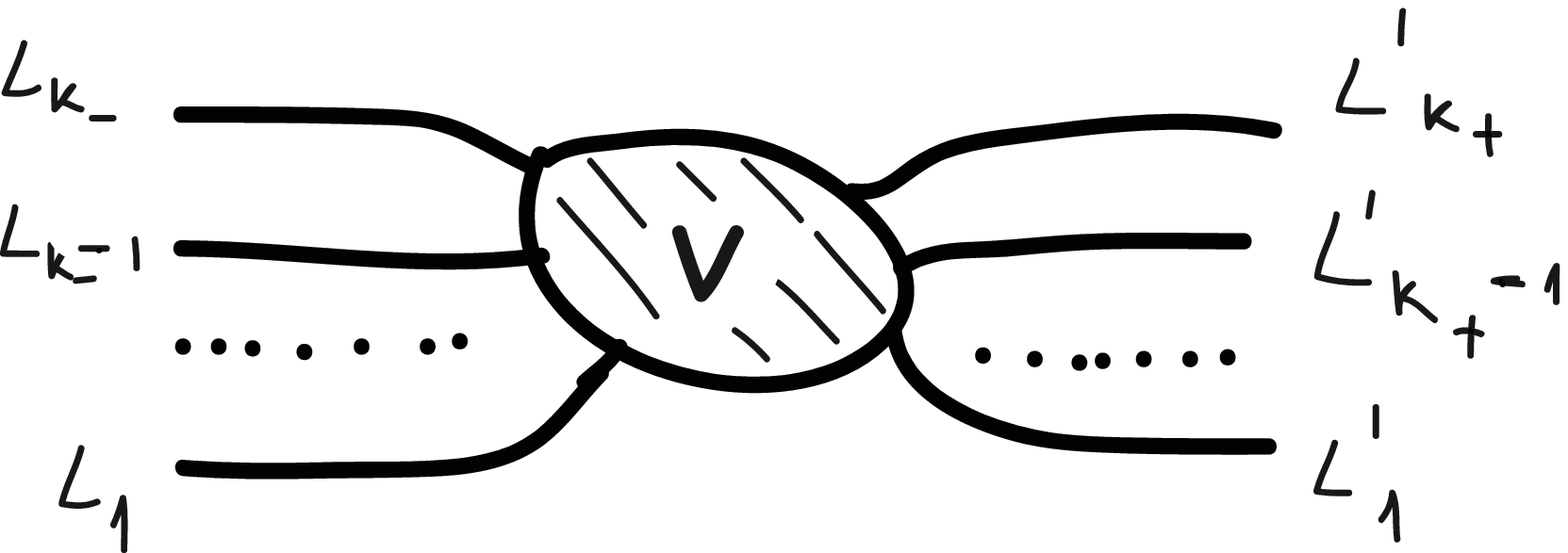, width=0.6\linewidth}
   \end{center}
   \caption{A cobordism $V:(L'_{j})\cobto (L_{i})$ projected
     on $\mathbb{R}^2$.}
\end{figure}

 A cobordism is called {\em monotone} if
$$V\subset ([0,1]\times \mathbb{R})\times M$$ is a
monotone Lagrangian submanifold.

We often view cobordisms as embedded in $\mathbb{R}^2 \times M$.
Given a cobordism $V\subset ([0,1] \times \mathbb{R}) \times M$ as
in Definition~\ref{def:Lcobordism}, we can extend trivially its
negative ends towards $-\infty$ and its positive ends to $+\infty$
thus getting a Lagrangian $\overline{V} \subset \mathbb{R}^2 \times
M$ and  we will in general not distinguish between $V$ and
$\overline{V}$.

\begin{rem} The notion of Lagrangian cobordism was introduced by Arnold \cite{Ar:cob-1,Ar:cob-2} in a slightly
less general form than above. Flexible aspects were discussed in early work of Eliashberg \cite{El:cob} and Audin \cite{Aud:calc-cob}. The first  indication that rigidity is also relevant to this notion of cobordism appeared in the work of Chekanov \cite{Chek:cob}. See \cite{Bi-Co:cob1} for more background material.
\end{rem}

Two Lagrangian cobordisms $V$, $V'\subset \C\times M$  are called {\em horizontally} isotopic if there is an ambient Hamiltonian isotopy $\psi_{t}$
(not necessarily compactly-supported) so that $\psi_{0}=\Id$, $\psi_{1}(V)=V'$ and
each $\psi_{t}$  slides along the ends of $V$ with bounded speed.
In particular, the ends of $V$ and $\psi_{t}(V)$ coincide away from a set $K\times M$ with $K\subset \C$ compact for all $t\in[0,1]$ (see \cite{Bi-Co:cob2} for a more explicit form of this definition).

\

\noindent \underline{C. The category $\mathcal{C}ob_{0}^{d}(M)$.}
Consider first the category
$\widetilde{{\mathcal{C}ob}}^{d}_{0}(M)$. Its objects are
families $$(L_{1}, L_{2},\ldots, L_{r})$$ with $r \geq 1$, $L_{i}\in
\mathcal{L}_{d}(M)$ so that additionally:
\begin{itemize}
\item[i.]
 for all $1\leq i\leq r$, if $L_{i}$ is non-void, then it is
 non-narrow (recall that a
monotone Lagrangian is non-narrow if its quantum homology $QH(L)$
with $\La=\Z_{2}[t,t^{-1}]$ coefficients does not vanish.
\item[ii.]  for all $1\leq i\leq r$ we have that the morphism
\begin{equation}\label{eq:Hlgy-vanishes}
    \pi_{1}(L_{i})\stackrel{i_{\ast}}{\longrightarrow}
   \pi_{1}(M)
   \end{equation}
   induced by  the inclusion $L_{i}\subset M$ vanishes.
\end{itemize}

\begin{rem}
In case the first Chern class $c_{1}$ and $\omega$ are proportional
as morphisms defined on $H_{2}(M;\Z)$ (and not only on
$\pi_{2}(M)$), then it is enough to assume in
(\ref{eq:Hlgy-vanishes}) that the image of $i_{\ast}$ is torsion.
\end{rem}

We denote by $\mathcal{L}^{\ast}_{d}(M)$ the Lagrangians in
$\mathcal{L}_{d}(M)$ that are non-narrow and additionally
satisfy~\eqref{eq:Hlgy-vanishes}.
Similarly, we denote by $\mathcal{L}_{d}(\C\times M)$
the Lagrangians in $\C\times M$ that
are uniformly monotone with the same $d_{V}=d$ and the same monotonicity
constant $\rho$ and we let $\mathcal{L}^{\ast}_{d}(\C\times M)$ be those Lagrangians
 $V\in \mathcal{L}_{d}(\C\times M)$ so that $\pi_{1}(V)\to \pi_{1}(\C\times M)$ is null.

\

We proceed to define the morphisms in
$\widetilde{\mathcal{C}ob}_{0}^{d}(M)$ . For any two horizontal
isotopy classes of cobordisms $[V]$ and $[U]$ with $V:(L'_{j})
\cobto (L_{i})$ (as in Definition~\ref{def:Lcobordism}) and
$U:(K'_{s})\cobto (K_{r})$, we define the sum $[V]+[U]$ to be the
horizontal isotopy class of a  cobordism $W:(L'_{j})+(K'_{s})\cobto
(L_{i})+(K_{r})$ so that $W=V \coprod \widetilde{U}$ with
$\widetilde{U}$ a suitable translation up the $y$-axis of a
cobordism horizontally isotopic to $U$ so that $\widetilde{U}$ is
disjoint from $V$.

The morphisms in $\widetilde{\mathcal{C}ob}^{d}_{0}(M)$ are now
defined as follows. A morphism $$[V] \in \mor \bigl( (L'_{j})_{1
\leq j \leq S}, (L_{i})_{1\leq i\leq T} \bigr)$$ is a horizontal
isotopy class that is written as a sum $[V]=[V_{1}] + \cdots +
[V_{S}]$ with each $V_{j}\in \mathcal{L}^{\ast}_{d}(\C\times M)$ a
cobordism from the Lagrangian family formed by the {\em single}
Lagrangian $L'_{j}$ and a subfamily $(L_{r(j)}, \ldots,
L_{r(j)+s(j)})$ of the $(L_{i})$'s, so that $r(j)+s(j)+1=r(j+1)$. In
other words, $V$ decomposes as a union of $V_{i}$'s each with a
single positive end but with possibly many negative ones - see
Figure \ref{fig:MorphCob}. We will often denote such a morphism by
$V: (L'_{j})\longrightarrow (L_{i})$.

The composition of morphisms is induced by concatenation followed by a
rescaling to reduce the ``width'' of the cobordism to the interval
$[0,1]$.

We consider here the void set as a Lagrangian of arbitrary
dimension. There is a natural equivalence relation among the objects of this category:
 it is induced by
the relations
\begin{equation}
(L, \emptyset) \sim (\emptyset,L) \sim (L).
\label{eq:eq-objects}
\end{equation}

There is also a corresponding equivalence relation for morphisms (we refer
again to \cite{Bi-Co:cob2} for details).
\begin{figure}[htbp]
   \begin{center}
      \epsfig{file=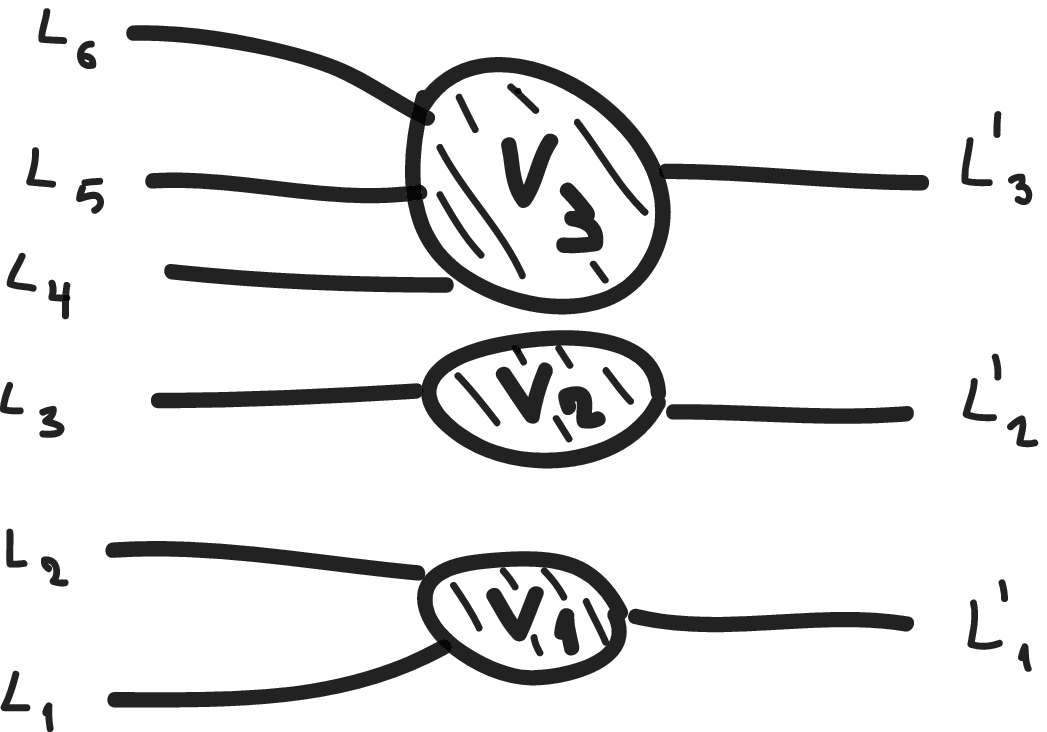, width=0.5\linewidth}
   \end{center}
   \caption{A morphism $V:(L'_{1},L'_{2},L'_{3}) \longrightarrow
     (L_{1}, \ldots, L_{6})$, $V=V_{1}+V_{2}+V_{3}$, projected to
     $\mathbb{R}^2$.}\label{fig:MorphCob}
\end{figure}

The category $\cob^{d}_{0}(M)$ is the quotient of $\widetilde{\cob}^{d}_{0}(M)$
with respect to the equivalence relations mentioned above (obviously, they are applied to
objects as well as to morphisms).

\begin{rem} The quotient $\cob^{d}_{0}(M)$ has a more complicated definition than
$\widetilde{\cob}^{d}_{0}(M)$ but it presents the advantage to avoid
redundancies related to the presence of multiple empty ends (these
are ends consisting of the empty Lagrangian). In particular, the
morphisms in $\cob^{d}_{0}(M)$ are represented by unique horizontal
isotopy classes of cobordisms $V$ (not necessarily connected) so
that the empty Lagrangian can appear as a positive (or negative) end
of $V$ only if there is a single positive end (or, respectively, a
single negative one). Moreover, $\cob^{d}_{0}(M)$ is strictly
monoidal.  \end{rem}

\subsubsection{The categories $D\fuk^{d}(M)$ and $T^{S}D\fuk^{d}(M)$}
We again recall a construction detailed in \cite{Bi-Co:cob2}.

\noindent\underline{A. The derived Fukaya category.} We review here
very briefly a few ingredients in the construction of this category.
This construction is presented in detail in
\cite{Se:book-fukaya-categ} and is summarized in the precise
monotone context we use here in \cite{Bi-Co:cob2}. In the same
monotone setting it has also appeared in the recent work of Sheridan
\cite{Sheridan1}.  We emphasize that we do not complete with respect
to idempotents.

The derived Fukaya category $D\fuk^{d}(M)$ is a triangulated
completion of the Donaldson category $\mathcal{D}on^{d}(M)$ . The
objects of $\mathcal{D}on^{d}(M)$ are Lagrangians $L\in
\mathcal{L}^{\ast}_{d}(M)$ and the morphisms are
$\mor_{\mathcal{D}on^{d}(M)}(L,L')=HF(L,L')$. Here $HF(L,L')$ is the
Floer homology of $L$ and $L'$. This is the homology of the Floer
complex $(CF(L,L'), d)$ that we consider here without grading and
over $\Z_{2}$. Assuming that $L$ and $L'$ are transverse, the vector
space $CF(L,L')$ has as basis the intersection points $L\cap L'$.
The differential $d$ counts $J$-holomorphic strips $u:\R\times
[0,1]\to M$ so that $u(\R\times \{0\})\subset L$ and $u(\R\times
\{1\})\subset L'$  and so that $\lim_{s\to-\infty}u(s,t)=x\in L\cap
L'$, $\lim_{s\to +\infty}u(s,t)=y\in L\cap L'$ ($J$ is a generic
almost complex structure on $M$). Monotonicity together with
condition (\ref{eq:Hlgy-vanishes}) are used to show the finiteness
of sums in the expression of $d$. Uniform monotonicity is again used
to show that $d^{2}=0$.  The main geometric step in the construction
of $D\fuk^{d}(M)$ is the construction of the Fukaya
$A_{\infty}$-category $\fuk^{d}(M)$. We will not review here the
definition of this $A_{\infty}$-category, we refer instead again to
\cite{Se:book-fukaya-categ}. Suffices to say that the objects in
this $A_{\infty}$-category are again Lagrangians in
$\mathcal{L}^{\ast}_{d}(M)$. At least naively, the morphisms
$\mor_{\fuk^{d}(M)}(L,L')=CF(L,L')$ (this is naive as morphisms have
to be defined even if $L$ and $L'$ are not transverse). The
structural maps are so that $\mu^{1}=d$ = the Floer differential and, for $k>1$,
$\mu^{k}$ is defined by counting $J$-holomorphic polygons with
$k+1$ sides. These polygons  have $k$ ``inputs'' asymptotic to
successive intersection points $x_{1}\in L_{1}\cap L_{2}$, $x_{2}\in
L_{2}\cap L_{3}$, $\ldots x_{k}\in L_{k}\cap L_{k+1}$  and one
``exit'' asymptotic to $y\in  L_{1}\cap L_{k+1}$ (this is again
naive for  the same reason as before: these operations have to be
defined for all families $L_{1},\ldots, L_{k+1}$ and  not only when
$L_{i}, L_{i+1}$, etc.,  are transverse). Again condition
(\ref{eq:Hlgy-vanishes}) and monotonicity  are used to show that the
sums appearing in the definition of the $\mu^{k}$'s are finite. One
then considers the category of modules over the Fukaya category
$$mod (\fuk^{d}(M)):=fun(\fuk^{d}(M), Ch^{opp})$$ where $Ch^{opp}$ is the opposite of the dg-category of chain complexes.
This $A_{\infty}$-category of modules is also actually a dg-category and
 is triangulated in the $A_{\infty}$-sense with
the triangles being inherited from the triangles in $Ch$ (where they
correspond simply to the usual cone-construction). At the same time, there is a Yoneda embedding
$\mathcal{Y}:\fuk^{d}(M)\to mod (\fuk^{d}(M))$, the functor
associated to an object $L\in \mathcal{L}^{\ast}_{d}(M)$ being
$CF(-,L)$.
 The derived Fukaya category $D\fuk^{d}(M)$ is the homology category associated to the triangulated
 completion of the image of the Yoneda embedding inside $mod(\fuk^{d}(M))$.

 \begin{rem}[The pullback functor]\label{rem:gen-non-sense1}
The following property of module categories will be useful later on.
Given two $A_\infty$ categories, $\mathcal{A}$ and $\mathcal{B}$,  the respective
module categories are related by an $A_\infty$ pullback functor
$*: fun(\mathcal{A}, \mathcal{B}) \to fun(mod (\mathcal{B}), mod (\mathcal{A}))$.
 This is
defined as follows (see also \cite[\S \textbf{(1k)})]{Se:book-fukaya-categ} as well 
as \cite[Appendix A]{Bi-Co:cob2}). 

A morphism in a  functor $A_{\infty}$ category, $ fun(\mathcal{C},\mathcal{C}')$, (such as, for instance, $mod(\mathcal{A})$, $mod(\mathcal{B})$, $fun(\mathcal{A},\mathcal{B})$) relating
two functors $U, U':\mathcal{C}\to\mathcal{C}'$  is, by definition, a pre-natural transformation $T$.  This consists of a couple 
 $(T^{0}, T')$ where $T^{0}$ is a collection of morphisms $T^{0}_{L}\in C(U(L),U'(L))$
 for each object $L$ of $\mathcal{C}$ and $T'$ is a collection
 of multilinear maps $$(T')^{d}: C(L_0, L_1) \otimes \cdots \otimes C(L_{d-1}, L_d) \to C (U(L_0), U' (L_{d}))$$ defined for each famliy $(L_{0},\ldots, L_{d})$ of objects
 of $\mathcal{C}$ and $d\geq 1$.  Such a collection is called an extended multilinear map; we denote
 by $C(-,-)$ the morphisms in the respective categories.

Let $F \in fun(\mathcal{A}, \mathcal{B})$. Define $F^*: mod (\mathcal{B}) \to mod (\mathcal{A})$ on objects by $F^{\ast}\phi= \phi \circ F$, where $\circ$ is the composition of $A_\infty$ functors (this is the obvious composition of 
mltilinear maps - see again \cite[Appendix A]{Bi-Co:cob2} for the notation).
By the definition of a functor between $A_{\infty}$ categories (this is again an extended multilinear map), 
we also need to define a sequence of multilinear maps 
$$(F^*)^d: C_{mod (\mathcal{B})}(\phi_0, \phi_1) \otimes \cdots \otimes C_{mod (\mathcal{B})}(\phi_{d-1}, \phi_d) \to C_{mod (\mathcal{A})}(F^* \phi_0, F^* \phi_{d}).$$
We set $(F^*)^d = 0$ when $d \geq 2$ and we are left with defining
\begin{align*}
(F^*)^1: C_{mod (\mathcal{B})}(\phi_0, \phi_1) & \to C_{mod (\mathcal{A})}(F^* \phi_0, F^* \phi_1)\\
\eta = (\eta^0, \eta') & \mapsto F^* \eta = (F^* \eta ^0, F^* \eta')~.~
\end{align*}
For every $L \in \text{Ob}(\mathcal{A})$ we put $(F^* \eta ^0)_L := \eta^0_{F(L)} \in C_{Ch^{opp}}(F^* \phi_0(L), F^* \phi_1(L))$ and  we define $F^* \eta'$ to be the 
extended multilinear map defined by $\eta' \circ F$. 
Consider now $T \in \mor_{fun(\mathcal{A}, \mathcal{B})}(F, G)$, $T = (T^0, T')$, a pre-natural transformation from $F$ to $G$.  We need to define a pre-natural transformation $T^*$ from $F^*$ to $G^*$.
We set $(T^*)' = 0$. Further, for each $\phi \in \text{Ob}(mod (\mathcal{B}))$ 
the element $(T^*)^0_\phi \in C_{mod (\mathcal{A})}(F^* \phi, G^* \phi)$ is a pre-natural transformation $(T^*)^0_\phi=\chi = (\chi^0, \chi')$ defined as follows.  For $L \in \text{Ob}(\mathcal{A})$, set $\chi^0_L := \phi(\eta^0_L) \in C_{Ch^{opp}}(\phi(F(L)), \phi(G(L)))$ and $\chi' := \phi \circ \star \circ (F, \eta, G)$ (see once more \cite[Appendix A]{Bi-Co:cob2} for the notation). Sketchily,
this composition  is $$\phi \circ \star \circ (F, \eta, G)(\cdot\cdot\cdot)=\sum \phi (F(\cdot\cdot\cdot),\ldots, F(\cdot\cdot\cdot), \eta(\cdot\cdot\cdot) , G(\cdot\cdot\cdot),\ldots, G(\cdot\cdot\cdot))~.~$$ 

Using the fact that module categories are dg-categories, direct computation shows that the 
pull-back so defined is indeed a functor of $A_{\infty}$ categories and, moreover,  if $T$ is a natural transformation (see \cite{Se:book-fukaya-categ} for the definition), then so are $\chi$ and $T^*$.  \end{rem}

\noindent \underline{B. The category $T^{S}D\fuk^{d}(M)$.}
Given any triangulated category $\mathcal{C}$, the associated category
$T^{S}\mathcal{C}$  has morphisms that reflect the various ways to
decompose objects in $\mathcal{C}$
 by iterated exact triangles.

A cone decomposition of length $k$ of an object $A\in \mathcal{C}$ is
a sequence of exact triangles:
$$T^{-1}X_{i}\stackrel{u_{i}}{\longrightarrow}Y_{i}
\stackrel{v_{i}}{\longrightarrow}Y_{i+1}\stackrel{w_{i}}{\longrightarrow}
X_{i}$$ with $1\leq i\leq k$, $Y_{k+1}=A$, $Y_{1}=0$. (Note that
$Y_2\cong X_1$.)  The sequence $(X_{1},\ldots, X_{k})$ is called the {\em linearization} of the 
cone decomposition. There is an obvious equivalence relation among cone decompositions.
The category $T^{S} \mathcal{C}$ called the {\em
  category of (stable) triangle (or cone) resolutions over}
$\mathcal{C}$ has as objects finite, ordered
families $(x_{1}, x_{2},\ldots, x_{k})$ of objects $x_{i}\in
\mathcal{O}b(\mathcal{C})$.

The morphisms in $T^{S} \mathcal{C}$ are simplest to describe when defined on
a family formed by a single object $x \in \mathcal{O}b(\mathcal{C})$
and target $(y_1, \ldots, y_q)$, $y_{i}\in \mathcal{O}b(\mathcal{C})$.
For this, consider triples $(\phi, a, \eta)$, where $a \in
\mathcal{O}b(\mathcal{C})$, $\phi: x \to T^s a$ is an isomorphism (in
$\mathcal{C}$) for some index $s$ and $\eta$ is a cone decomposition
of the object $a$ with linearization $(T^{s_1}y_1,
T^{s_{2}}y_{2},\ldots, T^{s_{q-1}}y_{q-1}, y_q)$ for some family of
indices $s_1, \ldots, s_{q-1}$.  A morphism $\Psi: x \longrightarrow
(y_1, \ldots, y_q)$ is an equivalence class of triples $(\phi,a,\eta)$
as before up to a natural equivalence relation (reflecting the equivalence of
cone decompositions).
We now define the morphisms between two general objects.
A morphism
$$\Phi\in\mor_{T^{S}\mathcal{C}}((x_{1},\ldots x_{m}),
(y_{1},\ldots, y_{n}))$$ is a sum $\Phi =\Psi_{1}\oplus \cdots \oplus
\Psi_{m}$ where $\Psi_{j}\in \mor_{T^{S}\mathcal{C}}(x_{j},
(y_{\alpha(j)},\ldots, y_{\alpha(j)+\nu(j)}))$, and $\alpha(1)=1$,
$\alpha(j+1) = \alpha(j) + \nu(j) + 1$, $\alpha(m) + \nu(m) = n$. The
sum $\oplus$ means here the obvious concatenation of morphisms. With
this definition this category is strict monoidal, the unit element
being given by the void family. We again refer to \cite{Bi-Co:cob2} for more
ample details.

The indexes $s_{i},s$ do not play any role in this paper as we work in an ungraded context.

There is a natural projection functor $\mathcal{P}:T^{S}\mathcal{C}\to \mathcal{C}$
that sends, on objects, $(x_{1},\ldots, x_{k})\to x_{k}$. On morphisms, we first define $\mathcal{P}$
for a simple morphism $\Psi:x\to (y_{1},\ldots, y_{q})$ represented by a triple $\Psi=(\phi,a,\eta)$ as
above. Denote by $T^{-1}x_{k}\to Z_{k}\to a \stackrel{\delta}{\longrightarrow} x_{k}$
the last exact triangle in the cone decomposition $\eta$.
In this case, $\mathcal{P}(\Psi)=\delta\circ\phi$. The value of $\mathcal{P}$ on a more general
 sum $\Phi=\Psi_{1}\oplus \Psi_{2}\ldots \oplus \Psi_{m}$ - again as above -  is
$\mathcal{P}(\Phi)=\mathcal{P}(\Psi_{m})$.

\begin{rem}\label{rem:general-non-sense0}
a. It is useful to notice that for two objects of $T^{S}\mathcal{C}$ that
coincide with single objects $a, a'\in \mathcal{O}b(\mathcal{C})$,
the morphisms from $a$ to $a'$ in $T^{S}\mathcal{C}$ are the {\em isomorphisms}
from $a$ to $a'$ in $\mathcal{C}$. In particular, not all the morphisms in $\mathcal{C}$
appear as morphisms in $T^{S}\mathcal{C}$.

b. We will need further the following simple remark. Assume that
$\mathcal{H}: \mathcal{C}\to\mathcal{C}$ is a
 functor that preserves the triangulated structure.
 It is then easy to see that  $\mathcal{H}$ admits a lift $\hat{\mathcal{H}}:T^{S}\mathcal{C}\to
 T{^S}\mathcal{C}$. Further, assuming that $H'$ is another such functor and that
  $\eta: \mathcal{H}\to \mathcal{H'}$ is a natural isomorphism, then
$\eta$ induces a natural transformation $\hat{\eta}:\hat{\mathcal{H}}\to \hat{\mathcal{H}'}$.
\end{rem}

\subsubsection{The endomorphism category}\label{section:general-non-sense2} For completeness, given a category
$\mathcal{X}$, we recall the (strict) monoidal structure on the endofunctor
category $\mathcal{E}nd(\mathcal{X})$; see also \cite{MacLane:categories}.  In brief, a category $C$ is strict monoidal if its objects as well as its morphisms can be multiplied via a bifunctor $C\otimes C\to C$. This multiplication is required to be associative and there has to be a unit object.
A functor is (strict) monoidal if it preserves the multiplication and the unit.
For non-strict monoidal categories the multiplication is only associative up to a natural isomorphism
and similarly for the axioms verified by the unit. 

On objects of $\mathcal{E}nd(\mathcal{X})$, multiplication is
simply the composition of functors.  On morphisms, given two natural
transformations $T_i \in \mor(F_i, G_i)$, $i=1,2$,  the product $T_1
\times T_2 \in \mor(F_2 \circ F_1, G_2 \circ G_1)$ is defined as
follows.  For any pair of objects $X, Y$ and any morphism $f: X \to
Y$, we have the following commutative diagrams:
$$
\xymatrix{
F_i(X) \ar[r]^{F_i(f)} \ar[d]_{T_i} & F_i(Y) \ar[d]^{T_i}\\
G_i(X) \ar[r]_{G_i(f)} & G_i(Y)
}
$$

We apply the functor $F_2$ to the diagram associated to $T_1$ and then use $T_2$ in the following fashion:
$$
\xymatrix{
F_2(F_1(X)) \ar[rr]^{F_2(F_1(f))} \ar[d]_{F_2(T_1)} & & F_2(F_1(Y)) \ar[d]^{F_2(T_1)}\\
F_2(G_1(X)) \ar[rr]^{F_2(G_1(f))} \ar[d]_{T_2} & & F_2(G_1(Y)) \ar[d]^{T_2}\\
G_2(G_1(X)) \ar[rr]^{G_2(G_1(f))} & & G_2(G_1(Y))
}
$$
We then take $T_2 \circ F_2(T_1)$ to be the desired natural transformation.  Simply put, we follow the natural transformations $F_2 \circ F_1 \stackrel{T_1}{\longrightarrow} F_2 \circ G_1\stackrel{ T_2}{\longrightarrow} G_2 \circ G_1$.
Clearly, we could as well have taken
$F_2 \circ F_1 \stackrel{T_2}{\longrightarrow} G_2 \circ F_1\stackrel{ T_1}{\longrightarrow} G_2 \circ G_1$ but because the square below commutes,
$$
\xymatrix{
F_2(F_1(X)) \ar[rr]^{F_2(T_1)} \ar[d]_{T_2} & & F_2(G_1(X)) \ar[d]^{T_2}\\
G_2(F_1(X)) \ar[rr]^{G_2(T_1)} & & G_2(G_1(X))
}
$$
this does not make any difference.

\subsection{The functors in the diagram}
\subsubsection{The functors in the top square.}\label{subsubsec:top-funct}
The inclusion $i:\pi_{1}(M)\to \Pi(Ham(M))$ is viewed here as an inclusion of categories and is
easily seen to be monoidal.

\

a. The functor $S$ is  Seidel's ``standard'' representation
\cite{Se:pi1}. One possible definition is as follows. Fix a loop of
Hamiltonian diffeomorphisms $\mathbf{g}=\{g_{t}\}_{t\in S^{1}}$.
Assume that $\mathbf{g}$ is induced by the Hamiltonian vector field
$X^{G}$ associated to a time dependent Hamiltonian $G:S^{1}\times
M\to \R$. Additionally, let $H:S^{1}\times M\to \R$ be another time
dependent Hamiltonian and let $H'(t,x)=G(t,g_{t}(x))+H(t,g_{t}(x))$.
Consider an almost complex structure (possibly time-dependent) $J$
that is compatible with $\omega$. Consider also the almost complex
structure $\tilde{J}=(g_{t})_{\ast}(J)$.  Assume that $J$ is so that
both $(H,J)$ and $(H',\tilde{J})$ are regular in the sense that the
Floer complexes $CF(H,J)$ and $CF(H',\tilde{J})$ are both defined.
Recall that there is a chain morphism $\psi :CF(H',\tilde{J})\to
CF(H, J)$ that is unique up to homotopy, induces an isomorphism in
homology and is associated to a generic homotopy $(H_{\tau},
J_{\tau}): (H',\tilde{J})\simeq (H, J)$, $\tau\in [0,1]$ of the
data. A $1$-periodic orbit $\gamma(t)$ of $H$ transforms into a
$1$-periodic orbit of $H'$ under the transformation $\gamma(t)\to
\gamma'(t)=g_{t}^{-1}(\gamma(t))$. Moreover, let $u:\R\times
S^{1}\to M$ be a solution to Floer's equation
$\partial_{s}u+J\partial_{t}u+\nabla H(t,u)=0$ and let
$v(s,t)=(g_{t})^{-1}(u(s,t))$. It is easy to see that $v$ verifies
the equation $\partial_{s}v+\tilde{J}\partial_{t}v+\nabla H'(t,v)=0$
(the gradient is taken in each equation with respect to the metric
associated to the respective almost complex structure).  As the
solutions of Floer's equation form the moduli spaces used to define
the differential in the Floer complex,  one concludes that  the map
 $\gamma \to \gamma'$ provides a chain level isomorphism (also called the naturality isomorphism)
  $N:CF(H,J)\to CF(H',\widetilde{J})$. We thus obtain that the morphism $N\circ \psi$
  is an endomorphism of $HF(H,J)\cong QH(M)$. It is not hard to see that this
  morphism is a module morphism over the quantum ring $QH(M)$ and that it is invertible. Thus it
  is completely determined by its value  $(N\circ \psi) ([M])\in QH(M)^{\ast}$ 
  on the unit  $[M]\in QH(M)$ of quantum homology. This specific element
  $(N\circ\psi) ([M])$ will be denoted by $S(\mathbf{g})$ and it depends only on the homotopy class of $\mathbf{g}$.
  The resulting application $[\mathbf{g}]\in \pi_{1}(Ham(M))\to S(\mathbf{g})\in QH(M)^{\ast}$ is a group
  morphism known as Seidel's representation \cite{Se:pi1}.

  \begin{rem}
  In Seidel's paper \cite{Se:pi1} the actual transformation used is rather $\gamma(t)\to g_{t}(\gamma(t))$. Thus, our class $S(\mathbf{g})$ is actually the inverse of the class there.
  \end{rem}

b. The functor $\widetilde{S}$.   The main part of the construction
appears in \cite{Se:book-fukaya-categ}. First, one constructs a
category $\fuk^{d}(M)^{free}$ on which $Ham(M)$ acts freely. This
category has as objects pairs $(L,g)$ with $g\in Ham(M)$ and $L\in
\mathcal{L}^{\ast}_{d}(M)$. One then picks for $g=\id$ all the
almost complex structures, Floer and perturbation data needed to
define the Fukaya category $\fuk^{d}(M)$.  With these fixed choices we now denote
the resulting $A_{\infty}$ category by $\fuk^{d}(M,0)$.  The objects of this category
are $(L, \id)$. This data is then transported by $g$: the
Lagrangians by $L\to g(L)$, but also the almost complex structures
as well as all the auxiliary data.  This defines Fukaya categories
$\fuk^{d}(M,g)$ with objects $(L,g)$. The multiplications in
$\fuk^{d}(M)^{free}$ are constructed so as to extend those on
$\{\fuk^{d}(M,g)\}, g\in Ham(M)$ in the sense that each one of
$\fuk^{d}(M,g)$ is a full and faithful subcategory of
$\fuk^{d}(M)^{free}$ and in such a way that the multiplications are
equivariant under the action of $Ham(M)$. This is possible because
the action of $Ham(M)$ is free on objects. To fix notation, if $h\in Ham(M)$
the action is $h(L,g)= (h(L),h\circ g)$. Thus, to each element
$g\in Ham(M)$ we can associate an obvious functor
$\widetilde{g}:\fuk^{d}(M)^{free}\to \fuk^{d}(M)^{free}$.  Given two
elements $g_{0}, g_{1}\in Ham(M)$ together with a path
$\mathbf{g}=\{g_{t}\}_{t\in [0,1]}$, $g_{t}\in Ham(M)$ that joins
them, there is  an associated natural transformation
$\xi_{\mathbf{g}}:\widetilde{g}_{0}\to \widetilde{g}_{1}$ that is
constructed in \cite{Se:book-fukaya-categ} (it appears there only
when $g_{0}=\id$ but the construction is the same in the general
case).  The definition of $\xi_{\mathbf{g}}$ is needed further in
the paper so we review it  shortly here. The natural transformation
$\xi_{ \mathbf{g}}$ is a collection of multi-linear maps defined for
any $k+1$ objects of $\fuk^{d}(M)^{free}$, $(L_{1}, h_1),\ldots (L_{k+1}, h_{k+1})$:
$$\xi_{\mathbf{g}}^{k}:CF((L_{1}, h_1), (L_{2}, h_2))\otimes\ldots CF((L_{k}, h_k), (L_{k+1}, h_{k+1}))\to CF(g_{0}(L_{1}, h_1), g_{1}(L_{k+1}, h_{k+1}))$$
together with some elements $\xi_{\mathbf{g}}^{0}\in CF(g_{0}(L, h), g_{1}(L, h))$ that are defined for each $(L,h)$. We summarize the construction of these maps next. It is not difficult to verify that, with the
definitions below, $\xi_{ \mathbf{g}}$ is indeed a natural transformation and not only a pre-natural one 
by the same argument as that given in \cite[\S \textbf{(10d)}]{Se:book-fukaya-categ}. 

We describe first the  maps $\xi_{\mathbf{g}}^{k}$ for $k\geq 1$. They are defined as follows.
Consider $(k+1)$-pointed stable disks $S'_{r}$ endowed with an interior marked point $z=z_{S'_{r}}$ ($r$ is here a parameter
moving inside the appropriate Deligne-Mumford-Stasheff associahedron).
This marked point is used to stabilize the disk in the source space (see also \cite[\S \textbf{(10d)}]{Se:book-fukaya-categ}) and to allow for a way to parametrize the choices of perturbation data compatible
with splitting and gluing.  The $k+1$ boundary  punctures are ordered clockwise. We denote by $C_{j}\subset
\partial S'_{r}$ the connected components of $\partial S'_{r}$ indexed
so that $C_{1}$ goes from the exit to the first entry, $C_{j}$ goes
from the $(j-1)$-th entry to the $j$-th, $1\leq j\leq k$, and $C_{k+1}$
goes from the $k$-th entry to the exit.  Up to reparametrization we may assume that the point $z_{S_{r}}=0$
and that the exit is identified with $(1,0)$.  Each of the other $k$ punctures, $z_{j}$, $1\leq j\leq k$
 corresponds to an entry and, in this writing,  is given by $z^{j}=e^{-2 i \pi  s_{j+1}}$ for a well defined
 $s_{j}\in [0,1]$.  We put $s_{1}=0$ and $s_{k+1}=1$ and we let $\mathbf{s}=\mathbf{s}_{S'_{r}}= \{0=s_{1}\leq s_{2}\leq \ldots \leq s_{k+1}\}$. The map $\xi_{\mathbf{g}}^{k}$ counts
perturbed $J$-holomorphic polygons, as in the definition of the
$A_{\infty}$- structure $\fuk^{d}(M)$, but subject to moving
boundary conditions controlled by $\mathbf{s}$:  if $z\in C_{j}$,
then $u(z)$ moves along $g_{s}(L_{j})$ for $s_{j}\leq s \leq
s_{j+1}$ so that, with the parametrization above, $u(e^{-2 i \pi
s})\in g_{s}(L_{j})$. The asymptotic conditions along the strip like
ends of $S_{r}$ are so that for the $j$-th input the asymptotic
limit belongs to $CF(g_{s_{j+1}}(L_{j}, h_j) , g_{s_{j+1}}(L_{j+1}, h_{j+1}))$ and
the output corresponds to an asymptotic limit in $CF(g_{0} (L_{1}, h_1),
g_{1} (L_{k+1}, h_{k+1}))$. Notice that  the complex $CF(g_{s_{j+1}}(L_{j}, h_j) ,
g_{s_{j+1}}(L_{j+1}, h_{j+1}))$ is the same as $CF((L_{j}, h_j), (L_{j+1}, h_{j+1}))$ because
of the construction of the category $\fuk^{d}(M)^{free}$.

The elements $\xi_{\mathbf{g}}^{0}\in CF(g_{0}(L, h),g_{1}(L, h))$ are given by counting curves with domain a stable disk  with a single boundary puncture - identified with $(1,0)$ - and  again together with an interior marked point identified with $0$. We will continue to denote a general such curve by $S'_{r}$. 
To have a better intuition of the definition of $\xi_{\mathbf{g}}^{0}$ assume for a moment that $h=id$
and that $g_{0}(L)$ and $g_{1}(L)$ are in general position (if this is not the case we use, as always,
Hamiltonian perturbations). Then $\xi_{\mathbf{g}}^{0}$ equals a sum: $\sum n_{i} x_{i}$
where the $x_{i}$'s are intersection points of $g_{0}(L)\cap g_{1}(L)$ and $n_{i}\in\Z$ counts the elements in the moduli space consisting of $J$-holomorphic curves $u:S'_{r}\to M$  
so that (assymptotically) $u(1,0)=x_{i}$ and, on the boundary, $u(e^{-2 i \pi t})\in g_{t}(L)$
(as always in this type of ``counting'' definitions, we restrict to the components of the relevant moduli space 
that are $0$-dimensional; monotonicity and Gromov compactness imply that the sum is finite).

Of course, for this construction to succeed one also needs to show that
the choices of interior marked points $z_{S'_{r}}$ together
with the additional choices specific to the definition of an $A_{\infty}$-category (strip-like ends, etc.) can all be made in a coherent way with respect to gluing and splitting in the Deligne-Mumford-Stasheff associahedron. This is achieved as in \cite{Se:book-fukaya-categ}. Moreover, it is easy to see that the fact that we are working in the monotone context and no longer in the exact one, as in \cite{Se:book-fukaya-categ}, can be dealt with by the methods  in \cite{Bi-Co:rigidity}.

We now consider the category of functors $fun(\fuk^{d}(M)^{free}, Ch^{opp})$
and we see  from Remark \ref{rem:gen-non-sense1}
that the functors $\widetilde{g}$ induce functors $$\hat{g}:fun(\fuk^{d}(M)^{free},Ch^{opp})
\to fun(\fuk^{d}(M)^{free},Ch^{opp})$$ given by composition
and the natural transformation $\xi_{\mathbf{g}}$ induces
 a natural transformation $\hat{\xi}_{\mathbf{g}}:\hat{g}_{0}\to \hat{g}_{1}$.
It is easily seen that the correspondence $g\to \hat{g}$,  $\mathbf{g}\to \hat{\xi}_{\mathbf{g}}$
provides an action of $\Pi (Ham(M))$ on $H fun(\fuk^{d}(M)^{free},Ch^{opp})$. In particular,
$[\hat{\xi}_{\mathbf{g}}]$ - the homology image of $\hat{\xi}_{\mathbf{g}}$ -
  only depends on the homotopy class (with fixed ends) of $\mathbf{g}$
and the natural transformations $[\hat{\xi}_{\mathbf{g}}]$ are natural isomorphisms.
The inclusions  $\fuk^{d}(M,g)\hookrightarrow \fuk^{d}(M)^{free}$ are all quasi-equivalences so that we can pull-back this action to $H fun(\fuk^{d}(M),Ch^{opp})$ as described in \cite{Se:book-fukaya-categ}. We will not change
the notation of the various functors, natural transformations, etc.,  after this pull-back.

The functors $\hat{g}_{0}$ and $\hat{g}_{1}$ preserve exact triangles and given
that $[\hat{\xi}_{\mathbf{g}}]$ is a natural isomorphism, we deduce from Remark \ref{rem:general-non-sense0} that the action of $\Pi (Ham(M))$ on $H fun(\fuk^{d}(M), Ch^{opp})$   induces an action $\widetilde{S}$ of $\Pi(Ham(M))$ on $T^{S}D\fuk^{d}(M)$.

\begin{rem}\label{rem:movingboundarydef} It will be useful in the following to have an alternative description of the homology classes $[\xi^{0}_{\mathbf{g}}]\in HF(g_{0}(L),g_{1}(L))$.  They can be described as follows: we consider a morphism
\begin{equation}\label{eq:moving-morph}
\phi_{\mathbf{g}}:CF(g_{0}(L), g_{0}(L))\to CF(g_{0}(L), g_{1}(L))
\end{equation}
defined by counting Floer strips
$u:\R\times [0,1]\to M$ with $u(\R\times \{0\})\subset g_{0}(L)$ and with $u(s\times \{1\}) \in g_{\psi (s)}(L)$
with $\psi : \R\to [0,1]$ an appropriate function that is increasing, null at $-\infty$ and $1$ at $+\infty$.
Then
\begin{equation}\label{eq:ident0}
[\xi^{0}_{\mathbf{g}}]=\phi_{\mathbf{g}}(PSS( [g_{0}(L)]))
\end{equation}
where $[g_{0}(L)]$ is the unit in $QH(g_{0}(L))$. 
The morphism $PSS: QH(g_{0}(L))\to HF(g_{0}(L), g_{0}(L))$ is the
PSS-isomorphism, $[g_{0}L]$ is the unit in $QH(g_{0}(L))$. 
The class $PSS( [g_{0}(L)])$ is itself the unit in
the ring $HF(g_{0}(L), g_{0}(L))$ and sometimes, in case no
confusion is possible, to shorten notation we will omit $PSS$ from
the notation of this homology class.

The proof of identity (\ref{eq:ident0}) is an exercise.  Indeed,
since $\xi^{0}_{\mathbf{g}}$ does not depend on the parametrization
of $\mathbf{g}$, we may choose the path $\mathbf{g}$ such that
 $g_s = g_0$ for $0 \leq s \leq 2/3$.   Fix now a Morse function $f:g_{0}(L)\to \R$ so
 that $f$ has a single maximum that we denote by $w$. In the pearl complex  \cite{Bi-Co:qrel-long}
  of $f$ this maximum represents $[g_{0}(L)]$.
 With this choice, for a generic metric on $g_{0}(L)$,
 the curves $u:S'_r \to M$ counted by $\xi^{0}_{\mathbf{g}}$ are in bijection
 with the pairs  $(u,\gamma)$ where  $\gamma:(-\infty, 0]\to g_{0}(L)$
 is a negative flow line of $f$ with origin at $w$ and so that $\gamma(0)=u(-1,0)\in u(S'_r)$. Indeed,
 by our choice of parametrization for $\mathbf{g}$, $u(-1, 0)\in g_0(L)$ and
thus, for a generic metric on $g_{0}(L)$, there is precisely one such flow line for each map $u$ (again, we restrict to $0$-dimensional moduli spaces for these counts so that there are actually a finite number 
of maps $u$ to worry about).   
The algebraic count of all  these pairs $(u, \gamma)$ is precisely the definition of the moving boundary PSS morphism (for instance see \cite{Bi-Co:cob1}), hence it is equal to $PSS([g_{0}(L)])$ in homology.
\end{rem}
c.  We now describe the action $\ast$. On objects, it associates to the unique object in $QH(M)^{\ast}$ the
identity functor of $T^{S}D\fuk^{d}(M)$. Given an element $\alpha\in QH(M)^{\ast}$ we need to explain
how we can associate to it a natural transformation $\hat{\eta}_{\alpha}$ from the identity to the identity in $T^{S}D\fuk^{d}(M)$.

By Remark \ref{rem:general-non-sense0},
to define such a transformation it is enough to produce a natural isomorphism of the identity
in $D\fuk^{d}(M)$.  The natural transformation $\hat{\eta}_{\alpha}$
is easy to describe on the Donaldson category. For this, fix $L\in \mathcal{L}_{d}^{\ast}(M)$.
Then $\hat{\eta}_{\alpha}$ associates to $L$ the
 homology class $\eta_{\alpha}(L)=\alpha \ast [L]\in QH(L)\cong HF(L,L)$. Here $\ast$ is the module action $QH(M)\otimes QH(L)\to QH(L)$. The properties
of this module action (as seen for instance in \cite{Bi-Co:qrel-long}) show that this definition induces a natural
transformation of the identity in the Donaldson category.
We now describe briefly the refinement of this definition at the level of
 the Fukaya category.

First, one defines a natural transformation
$\eta_{\alpha}: \id_{\fuk^{d}(M)}\to \id_{\fuk^{d}(M)}$ of $A_\infty$ functors by using moduli spaces of perturbed $J$-holomorphic polygons $S'_{r}$ with
one interior marked point $z_{S'_{r}}$ as were used at the point b
above.  The relevant moduli spaces are formed by curves $u:S'_{r}\to
M$ that satisfy the usual boundary conditions $u(C_{i})\subset
L_{i}$ and are so that if $c_{\alpha}$ is a cycle representing the
homology class $\alpha$ - for instance this could be a union of
unstable manifolds of an appropriate Morse function on $M$ - then
$u(z_{S'_{r}})\in c_{\alpha}$, thus yielding multilinear maps
$\eta_\alpha^k: CF(L_1, L_2) \otimes \cdots \otimes CF(L_k, L_{k+1}) \to CF(L_1, L_{k+1})$.  For each $L$, the morphism $\eta_L^0 \in CF(L, L)$ is defined by counting curves $u: S'_r \to (M, L)$, where $S'_r$ is a stable disk with one marked point on the boundary and one marked point in the interior.  The boundary point is mapped to an element of $CF(L, L)$ and the interior point $z_{S'_r}$ satisfies $u(z_{S_r}) \in c_\alpha$.

As a consequence of Remark \ref{rem:gen-non-sense1}, we obtain that
$\eta_{\alpha}$ induces a natural transformation
$\hat{\eta}_{\alpha}$ of the identity of
$fun(\fuk^{d}(M),Ch^{opp})$.  We then transport all this structure
to the homological category.  It is not difficult to show that the
resulting natural transformation $[\hat{\eta}_{\alpha}]$ does not
depend on the choices used in the construction.  It is then shown
that, additionally, for $\alpha,\beta\in QH(M)^{\ast}$, we have
$[\hat{\eta}_{\alpha\ast\beta}]=[\hat{\eta}_{\alpha}]\circ
[\hat{\eta}_{\beta}]$ . This identity is an obvious generalization
of the properties of the module action of $QH(M)$ on Lagrangian
quantum homology as explained for instance in \cite{Bi-Co:qrel-long}
\S 5.3. The proof uses moduli spaces of disks with boundary
punctures as before but with two internal marked points with the
constraint that, up to re-parametrization, one of the internal
marked points is at $(0,0)$, the output corresponds to a boundary
puncture at $(1,0)$ and the second internal marked point belongs to
the segment $(0,1)\times \{0\}\subset D^{2}$.

All the resulting natural transformations are natural isomorphisms
because the elements $\alpha$ are, by definition, invertible in the ring  $QH(M)$. It then follows that
$[\hat{\eta}_{\alpha}]$  induces a natural isomorphism $\bar{\eta}_{\alpha}$ of the identity on $D\fuk^{d}(M)$ and,
further, a natural transformation, also denoted by $\bar{\eta}_{\alpha}$, on $T^{S}D\fuk^{d}(M)$.
Moreover, $\bar{\eta}_{\alpha\ast\beta}=
\bar{\eta}_{\alpha}\circ\bar{\eta}_{\beta}$. This provides the action of $QH(M)^{\ast}$ on
 $T^{S}D\fuk^{d}(M)$.

\subsubsection{The functor $\widetilde{\mathcal{F}}$.}\label{sec:ftilde}
There is first an auxiliary functor $\mathcal{F}$ that will be
needed here and that we now recall from \cite{Bi-Co:cob1}. This
functor is a sort of linearization of $\widetilde{\mathcal{F}}$.
 It is defined on a simplified version of $\cob^{d}_{0}(M)$ that is denoted
by $S\cob^{d}_{0}(M)$. The objects of $S\cob^{d}_{0}(M)$ are single Lagrangians
$L\in \mathcal{L}^{\ast}_{d}(M)$ and the morphisms from $L$  to $L'$ are horizontal isotopy classes
of cobordisms $V:L\cobto (L_{1},\ldots, L_{k-1}, L')$, $V\in\mathcal{L}^{\ast}_{d}(\C\times M)$,
 where $k\in \N$ and $L_{i}\not=\emptyset$ if $k>0$ with the understanding that if $k=0$, then $V:L\cobto L'$.

The functor $$\mathcal{F}:S\cob^{d}_{0}(M)\to D\fuk^{d}(M)$$
is the identity on objects.  For a cobordism $V$ giving a morphism between $L$ and
$L'$ in $S\cob^{d}_{0}(M)$,  the morphism
 $\mathcal{F}(V)\in HF(L,L')=\mor_{D\fuk}(L,L')$
 is the image of the unity in $HF(L,L)$ (induced by the fundamental class of
$L$) through a morphism $\phi_{V}:HF(L,L)\to HF(L,L')$. The morphism
$\phi_{V}$ is given by counting Floer strips in $\mathbb{R}^2\times
M$ with boundary conditions along $V$ on one side and along
$\gamma\times L$ on the other side, $\mathcal{F}(V)=\phi_{V}([L])$.
Here $\gamma\subset \mathbb{R}^2$ and $V$ are as in
Figure~\ref{fig:MorphCob2}, with $L'=L_{k}$.
\begin{figure}[htbp]
   \begin{center}
      \epsfig{file=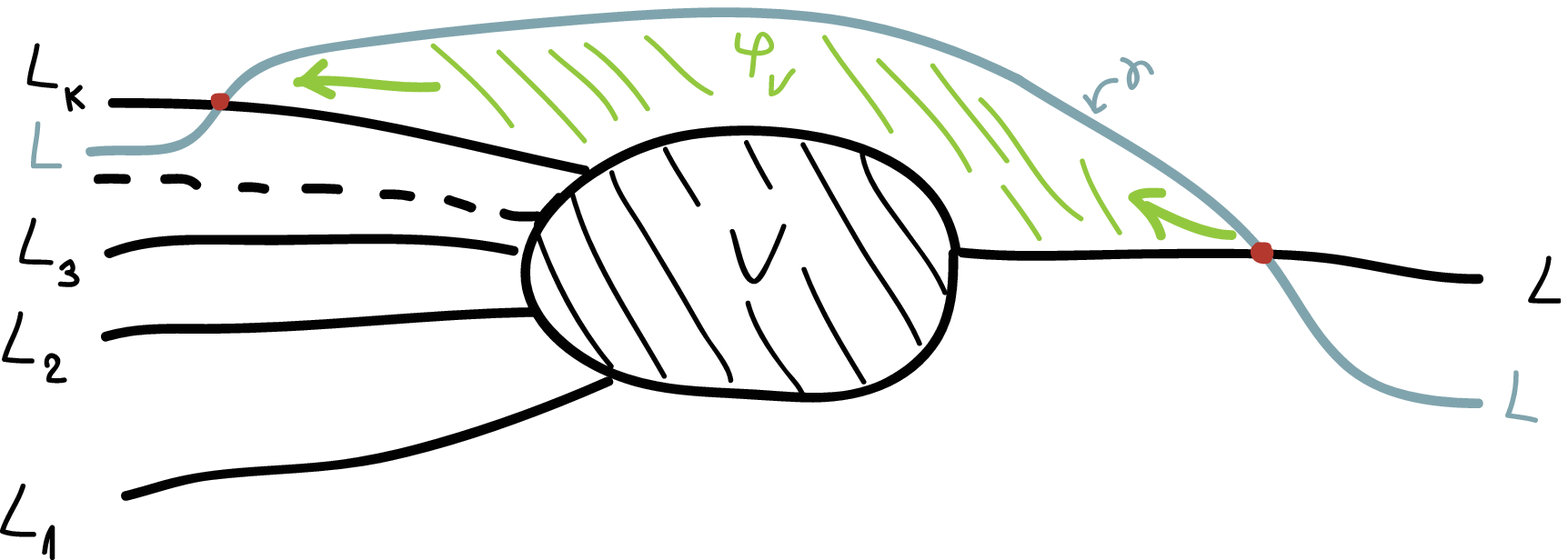, width=0.7\linewidth}
   \end{center}
   \caption{A cobordism $V\subset \mathbb{R}^2\times M$
     with a positive end $L$ and with $L'=L_{k}$ together with the projection of the
     $J$-holomorphic strips that define the morphism $\phi_{V}$.
     \label{fig:MorphCob2}}
\end{figure}
In case the cobordism $V$ is elementary - which means that $L$ is the single positive end of $V$ and
$L'$ is its single negative end - then $\phi_{V}$ as well as $\mathcal{F}(V)$
are isomorphisms.

\

The functor $$\widetilde{\mathcal{F}}:\cob^{d}_{0}(M)\to T^{S}D\fuk^{d}(M)$$
is constructed in \cite{Bi-Co:cob2}. Here we only need three of its properties:
\begin{itemize}
\item[i.] $\widetilde{\mathcal{F}}$ coincides with the identity on objects.
\item[ii.] $\widetilde{\mathcal{F}}$ is monoidal.
\item[iii.] For a morphism $V:L\to L'$, $V\in \mor_{\cob^{d}_{0}(M)}(L,L')$
 (that is, a morphism associated to an elementary cobordism)
 the value of $\widetilde{\mathcal{F}}$ coincides with the value on $V$ of the
  linearized functor $\mathcal{F}$:
 $$\widetilde{\mathcal{F}}(V)=\mathcal{F}(V)~.~$$ \end{itemize}
To make explicit the identity at iii recall that the value
$\widetilde{\mathcal{F}}(V)$ is by definition a triple $(\phi,
a,\eta)$ where $\phi: L\to a$ is an isomorphism in $D\fuk^{d}(M)$
and $\eta$ is a cone decomposition of $a$ in just one stage
$$T^{-1}L'\to 0\to a\stackrel{\delta}{\longrightarrow} L'~.~$$ In
other words  $\delta:a\to L'$ is an isomorphism that identifies
$L'$ and $a$. Thus  $\widetilde{\mathcal{F}}(V)$ reduces to giving
an isomorphism $\bar{\phi}:L\to L'$ in $D\fuk^{d}(M)$ which can then
be written as $\bar{\phi}=\delta\circ \phi$ (conversely, for any
such isomorphism, one can take $a=L'$, $\delta=\id$). As seen
above, because $V$ is elementary, $\mathcal{F}(V)$ is precisely such an
isomorphism $\in \mor_{D\fuk^{d}(M)}(L,L')=HF(L,L')$.

\subsubsection{The functor $\Sigma$.}
We first fix some notation. A Lagrangian cobordism $V: L \to (L_i)$ is written in coordinates as:
$$V \subset T^*[0,1] \times M\ , \ \  p \mapsto (t(p), y(p), \pi_M(p))~.~$$
We recall that the symplectic form on $T^{\ast}[0,1]\times M$ is $\bar{\omega}=\omega_{0}\oplus \omega$. The coordinates in $T^{\ast}[0,1]=[0,1]\times \R$ are $(t,y)$.

Given a path $\mathbf{g} = \{ g_t \}_{t \in [0,1]}, g_t \in Ham(M)$ generated by a time-dependent Hamiltonian function $G:[0,1]\times M\to \R$, we get a symplectomorphism
$$\Phi_G: T^*[0,1] \times M \to T^*[0,1] \times M, \ \  (t, y, p) \mapsto (t, y + G_t(g_t(p)), g_t(p)).$$
By restricting this map to the cobordism $V$, we get a new Lagrangian embedding
\begin{equation}\label{eq:isotopy1}\Phi_{G}: V \to T^*[0,1] \times M \ , \ \
 p \mapsto (t(p), y(p) + G_{t(p)}(g_{t(p)}(\pi_M(p))), g_{t(p)}(\pi_M(p)))~.~
 \end{equation}
We will assume here and below that, after a possible reparametrization,
the path $\mathbf{g}$ is constant close to each one of its ends. Moreover, the
Hamiltonian $G$ is supposed to be so that $G$ vanishes near the ends of the interval $[0,1]$
in the sense that  for some very small $\epsilon$  and any $m\in M$,
$G(t,m)=0$ for $t<\epsilon$ and $t>1-\epsilon$.
It follows that $V^{G}=\Phi_{G}(V): g_1(L) \to (g_0(L_i))$ is a cobordism. It will be called
the Lagrangian suspension of $V$ by the path $\mathbf{g}$. This agrees with standard terminology when $V$ is the trivial cobordism \cite{Po:hambook}.

\begin{prop}\label{prop:Susp-funct}
The Lagrangian suspension extends to an action of $\Pi (Ham(M))$ on $\cob^{d}_{0}(M)$.
\end{prop}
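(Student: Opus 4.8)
The plan is to verify that the assignment $\mathbf{g}\mapsto \Phi_G$ descends to a well-defined action of the fundamental groupoid, which amounts to four checks: (1) for fixed $\mathbf{g}$, the map $V\mapsto V^G$ is a functor from $\cob^d_0(M)$ to itself; (2) on objects this functor is the expected one, $L\mapsto g_1(L)$ (so that, up to the equivalence relations defining $\cob^d_0(M)$, it makes sense to compose across different $\mathbf{g}$'s); (3) the construction is compatible with concatenation of paths, i.e.\ $\Phi_{G\#H}$ agrees up to horizontal isotopy with $\Phi_G$ applied after $\Phi_H$; and (4) the horizontal isotopy class of $V^G$ depends only on the homotopy class (rel endpoints) of $\mathbf{g}$. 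I would organize the proof around these four points.

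For (1), I would first check that $\Phi_G$ as defined in \eqref{eq:isotopy1} genuinely sends a cobordism to a cobordism: near the ends of $[0,1]$ the hypothesis that $G$ vanishes and $\mathbf{g}$ is constant guarantees that $V^G$ has the product form \eqref{eq:cob_ends} required by Definition~\ref{def:Lcobordism}, with negative ends $g_0(L_i)=L_i$ (since $g_0=\id$ for a based path, or more generally the appropriate endpoint) and positive end $g_1(L)$. Monotonicity of $V^G$ follows because $\Phi_G$ is a symplectomorphism of $T^*[0,1]\times M$ isotopic to the identity, hence preserves $\pi_2$, the symplectic area and Maslov index homomorphisms, and therefore the monotonicity constant $\rho$ and the invariant $d_V$; it also preserves the vanishing of $\pi_1(V)\to\pi_1(\C\times M)$. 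That $\Phi_G$ respects the sum operation $+$ (disjoint union with a vertical translation) is clear since $\Phi_G$ commutes with translations in the $y$-coordinate, so it sends a morphism decomposed as $V=V_1+\cdots+V_S$ to $V_1^G+\cdots+V_S^G$, hence morphisms to morphisms. Functoriality with respect to composition (concatenation in the $t$-direction followed by rescaling) is where a small amount of care is needed: because $G$ vanishes near $t=0,1$, the suspension operation is ``local in $t$'' and commutes with gluing two cobordisms end-to-end after the reparametrization used to define composition; identity morphisms are the trivial cobordisms $\R\times L$ with the path $g_t\equiv\id$, which are fixed.

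For (2), the value on objects is read off directly from \eqref{eq:isotopy1} restricted to an end: the positive end $\R\times L$ is sent to $\R\times g_1(L)$, and one checks this is compatible with the object equivalences \eqref{eq:eq-objects}. For (3), given composable paths $\mathbf{g}$ (Hamiltonian $G$) and $\mathbf{h}$ (Hamiltonian $H$) with $g_1=h_0$, the concatenation $\mathbf{g}\#\mathbf{h}$ can be generated by a Hamiltonian $K$ obtained by the standard time-rescaling-and-splicing of $G$ and $H$; a direct comparison of $\Phi_K$ with $\Phi_H\circ\Phi_G$ shows they differ by a Lagrangian isotopy of $V^K$ that slides the ends along themselves with bounded speed, i.e.\ a horizontal isotopy, because reparametrizing the path $\mathbf{g}$ changes $V^G$ only by such an isotopy. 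Finally (4), the main point: if $\mathbf{g}^0$ and $\mathbf{g}^1$ are homotopic rel endpoints via $\{\mathbf{g}^\sigma\}_{\sigma\in[0,1]}$, generated by a family $G^\sigma$, then the family $\Phi_{G^\sigma}$ produces a smooth family of Lagrangian cobordisms $V^{G^\sigma}$ all with the same ends $g_1(L)$ and $(g_0(L_i))$; by the standard fact that an isotopy of Lagrangian submanifolds is generated by a (not necessarily compactly supported) ambient Hamiltonian isotopy, and since here the isotopy is fixed near the ends in the $t$-direction and moves the ends only by sliding them along themselves with bounded speed, this is precisely a horizontal isotopy $V^{G^0}\simeq V^{G^1}$.

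\textbf{Main obstacle.} The step I expect to require the most attention is (4) — showing that a homotopy of paths in $Ham(M)$ yields a \emph{horizontal} (rather than merely ambient) isotopy of the suspended cobordisms. One must verify carefully that the ambient Hamiltonian isotopy realizing the family $\{V^{G^\sigma}\}$ can be taken to have bounded speed on the ends and to preserve the ends away from a compact set, using that the homotopy is rel endpoints and the normalization that $G^\sigma$ vanishes near $t=0,1$; without the endpoint-fixing this would fail. The compatibility with composition in (1) and with concatenation in (3) is routine given the locality-in-$t$ of the construction, but the bookkeeping of reparametrizations should be done with some care.
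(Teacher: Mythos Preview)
Your outline has the right pieces, but the action is mis-typed and step (4) appeals to a fact that is false in general.

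\textbf{The structure of the action.} An action of the groupoid $\Pi(Ham(M))$ on $\cob^d_0(M)$ is a functor $\Sigma:\Pi(Ham(M))\to\en(\cob^d_0(M))$: \emph{objects} $g\in Ham(M)$ must go to endofunctors and \emph{morphisms} (homotopy classes of paths) $\mathbf{g}:g_0\to g_1$ to natural transformations. Your item (1) tries to make $V\mapsto V^G$ into an endofunctor of $\cob^d_0(M)$ for each fixed path $\mathbf{g}$, but this is ill-typed: $V:L\to(L_i)$ is sent to $V^G:g_1(L)\to(g_0(L_i))$, so the positive and negative ends transform by \emph{different} diffeomorphisms and no consistent object map exists. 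In the paper the functor attached to an object $g$ is simply $V\mapsto(\id\times g)(V)$ (no suspension), while the natural transformation attached to a path $\mathbf{g}$ has components $\Sigma(\mathbf{g})(L)=[[0,1]\times L]^{\mathbf{g}^{-1}}$, i.e.\ suspensions of \emph{trivial} cobordisms only. The general suspension $V^{\mathbf{g}^{-1}}$ then appears as the diagonal of the naturality square
\[
\xymatrix{
g_0(L)\ar[r]^-{g_0(V)}\ar[d]_{\Sigma(\mathbf{g})(L)} & (g_0(L_i))\ar[d]^{\coprod_i\Sigma(\mathbf{g})(L_i)}\\
g_1(L)\ar[r]_-{g_1(V)} & (g_1(L_i))
}
\]
and the commutativity of this square (proved by reparametrizing $\mathbf{g}$ so that all motion occurs on one half of $[0,1]$ and invoking the homotopy-invariance lemma) is a genuine step that your outline obscures. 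Your ``functoriality with respect to composition'' computation is in fact this naturality check, not the functoriality of an endofunctor.

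\textbf{Step (4).} You invoke ``the standard fact that an isotopy of Lagrangian submanifolds is generated by an ambient Hamiltonian isotopy''. This is false without a flux hypothesis (e.g.\ translate the zero section in $T^*S^1$), and here the cobordisms are non-compact with nontrivial $H^1$, so one must actually compute. The paper writes the family $\Psi_s(t,y,m)=(t,\,y+H(s,t,h(s,t)(m)),\,h(s,t)(m))$ explicitly, where $h(s,t)$ is the homotopy of paths and $H,F$ are Hamiltonians for $\partial_t h$ and $\partial_s h$, and then checks $\iota_{\partial_s\Psi}\bar\omega=-D\bar F_s$ via the identity $\partial_s H+\{H,F\}=\partial_t F$. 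This explicit exactness is the content of the lemma; without it you have only a Lagrangian isotopy, not a horizontal Hamiltonian one.
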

\begin{proof}

We start with the following geometric statement, which also shows that the suspension is independent of
the parametrization of the path $\mathbf{g}$.
\begin{lem}\label{prop:invariance}
Suppose $\mathbf{f}=(f_t)$ and $\mathbf{g}=(g_t)$ are paths in $Ham(M)$ that are induced, respectively,
by Hamiltonians $G$ and $G'$ as above. Further, assume that $\mathbf{f}$ and $\mathbf{g}$ are
 homotopic in $Ham(M)$ relative to their endpoints. Then the associated suspensions $V^{G}$
 and $V^{G'}$ are horizontally Hamiltonian isotopic for any cobordism $V$.
\end{lem}
\noindent {\em Proof of Lemma \ref{prop:invariance}.} Let $h:[0,1]\times [0,1]\to Ham(M)$ denote a homotopy so that $h(0,t) = f_t$ and $h(1,t) = g_t$
(the homotopy parameter is $s$). Up to reparametrization we may assume that the paths
of Hamiltonian diffeomorphisms
$h_{s}(t)$, $t\in [0,1]$ ($s$ fixed and $t$ varies) and $h_{t}(s)$, $s\in [0,1]$ ($t$ fixed and $s$ varies)
are constant near their ends for all $s,t\in [0,1]$.
We now consider two families of Hamiltonian vector fields induced by  $h$:
$$X_{s,t}(h(s,t)(m))= \frac{\partial}{\partial t} (h(s,t)(m)) , \quad  Y_{s,t}(h(s,t)(m))= \frac{\partial}{\partial s}
(h(s,t)(m))~.~$$
In particular, $X_{0,t}$ is the Hamiltonian vector field of $G$ and $X_{1,t}$ is the
Hamiltonian vector field of $G'$.

Let  $H,F: [0,1]\times [0,1]\times M\to \R$ be associated one parametric families of Hamiltonians so that
$\omega (- ,X_{s,t}) =d H_{s,t}(-)$ and
$\omega (-, Y_{s,t})=d F_{s,t}(-)$. The notation here is  $H_{s,t}(m)=H(s,t,m)$ (and similarly for $F$)
and we are taking exterior derivative in $M$ and keeping $s,t$ fixed.  We also assume that $H_{0}=G$ and $H_{1}=G'$ (this is not restrictive
because both $G$ and $G'$ vanish near their ends). Moreover, for each fixed $s$, the Hamiltonian $H(s,-,-)$
vanishes close to the ends of the interval $[0,1]$ and the same is true for each fixed $t$ for
the Hamiltonian $F(-,t,-)$.
Consider the following map:
\begin{align}\label{eq:glob-iso}
\Psi: [0,1]\times T^*[0,1] \times M  \to T^*[0,1] \times M \\ \nonumber  (s,t,y,m) \mapsto (t, y + H(s,t, h(s,t)(m)), h(s,t)(m))
\end{align}

It is easy to check that each map $\Psi_{s}$, obtained from $\Psi$ by keeping $s$ fixed,
 is symplectic.  In particular, 
  the restriction of $\Psi$ to any cobordism $V$ is a Lagrangian
 isotopy from $V^{G}$ to $V^{G'}$.  Moreover, given that $H(s,-,-)$ is vanishing near the ends
 of $[0,1]$ for each $s\in [0,1]$, to verify that $\Psi|_{V}$ is a horizontal isotopy
 it only remains to prove that $\Psi$ is Hamiltonian.
 Define for each $s\in [0,1]$,
$$\alpha_s (-)= \bar{\omega}(\Psi_{\ast}(\frac{\partial}{\partial s}), -)\in\Omega^{1}(T^{\ast}[0,1]\times M)~.~$$
Thus, we need to show that $\alpha_s$ is exact for each $s$.
Let $$\bar{F} :[0,1]\times T^*[0,1] \times M \to \mathbb{R}\ ; \ (s,t,y,m) \mapsto F(s,t,m)~.~$$
Denote by $D$ the exterior derivative on $T^{\ast}[0,1]\times M$.
We want to notice that $D\bar{F}_{s}=-\alpha_{s}$. For this we compute
$$\Psi_{\ast}(\frac{\partial}{\partial s})= (0, \frac{\partial H}{\partial s} +dH (\frac{\partial h}{\partial s}), \frac{\partial h}{\partial s})=(0,\frac{\partial H}{\partial s}+ dH(Y), Y)= (0,\frac{\partial F}{\partial t}, Y)$$
the last equality coming from the identity $\frac{\partial H}{\partial s}+\{H,F\}=
\frac{\partial F}{\partial t}$ (our convention for the Poisson bracket being $\{A,B\}=\omega(X^{B},X^{A})$).
We immediately deduce $\alpha_{s}(\frac{\partial}{\partial y})=D\bar {F}(\frac{\partial}{\partial y})=0$.
Further, $\alpha_{s}(\frac{\partial}{\partial t})=-\frac{\partial F_{s}}{\partial t}= -D\bar{F} (\frac{\partial}{\partial t})$. Finally, for $\xi\in TM$, we have
$\alpha_{s}(\xi)=-dF_{s}(\xi)=-D\bar{F}(\xi)$ which proves the claim.
\qed

It follows from the Lemma that the horizontal isotopy class of $V^{G}$ only depends on the
horizontal isotopy class of $V$ and on the homotopy class (with fixed endpoints) of $\mathbf{g}$.
We will denote this horizontal isotopy class by $[V]^{\mathbf{g}}$, in other words,
$[V]^{\mathbf{g}}=[V^{G}]$. By a slight abuse of notation we will generally
 denote by $V^{\mathbf{g}}$ the cobordism $V^{G}$ in case the choice of the Hamiltonian $G$ is not
 significant.

\

We  now proceed to define a monoidal functor
$$\Sigma :\Pi (Ham(M)) \to  \en(\cob^{d}_{0}(M)).$$

Let $g \in Ob(\Pi (Ham(M)))$ and set
\begin{align*}
\Sigma(g): \cob^{d}_{0}(M) & \longrightarrow \cob^{d}_{0}(M)\\
\left\{
\begin{array}{c}
L  \in Ob\\
\text{[} V] \in \mor
\end{array}
\right\}
& \longrightarrow
\left\{
\begin{array}{c}
g(L) \\
\text{[}g(V)]:= [(\id  \times g)(V)]
\end{array}
\right\}
\end{align*}

This is well defined and a functor by Lemma \ref{prop:invariance}.
Given a path of Hamiltonian diffeomorphisms, now viewed as a
morphism $\mathbf{g}: g_0 \to g_1$ in $\Pi (Ham(M))$ we need to
define a natural transformation $\Sigma(\mathbf{g})$ between  the
two functors $\Sigma(g_0)$ and $\Sigma(g_1)$. For each Lagrangian
$L\in \mathcal{O}b(\mathcal{C}ob^{d}_{0}(M))$ we define
$$\Sigma(\mathbf{g})(L)=[[0,1] \times L]^{\mathbf{g}^{-1}}~.~$$
 Here $\mathbf{g}^{-1}$
is the path $g_{1-t}$ (the term $1-t$ is due to the fact that morphisms in $\cob$ go from right to left).
For further use we will put more generally $\Sigma(\mathbf{g})(V)=[V]^{\mathbf{g}^{-1}}$.

Given a  cobordism $V: L \to (L_i)$, we need to show that the following
diagram commutes:

\begin{eqnarray}\label{eq:diag-sigma}
\xymatrix{
g_0(L) \ar[rr]^{g_0(V)} \ar[dd]_{\Sigma(\mathbf{g})(L)} \ar@{-->}[ddrr]^{\Sigma(\mathbf{g})(V)}& & (g_0(L_i)) \ar[dd]^{\coprod_i \Sigma(\mathbf{g})(L_i)}\\
& & \\
g_1(L) \ar[rr]_{g_1(V)} & & (g_1(L_i))
}
\end{eqnarray}
Consider two functions $a_{j}(t) : [0,1] \to [0,1]$, $j=0,1$ that are smooth,
 surjective and so that  $a_0(t) = 1$ for $\frac{1}{2} \leq t \leq 1$ and
 $a_1(t) = 0$ for $0 \leq t \leq \frac{1}{2}$.
 We use these functions to reparametrize the path $\mathbf{g}$. We put
 $g^{j}_{t}=g_{a_{j}(t)}$ and $\mathbf{g}^{j}=(g^{j}_{t})$.
 We also need two cobordisms $V_{0}$ and $V_{1}$ horizontally isotopic to $V$ and so that
 $V_{0}|_{[0,\frac{1}{2}]\times \R\times M}=\cup_{i}[0,\frac{1}{2}]\times \{i\}\times L_{i}$
 and $V_{1}|_{[\frac{1}{2},1]\times \R\times M}= [\frac{1}{2},1]\times \{1\}\times L$.
 By Lemma \ref{prop:invariance}
 we have $\Sigma(\mathbf{g})(V)=\Sigma({\mathbf{g}^{0}})(V_0)=\Sigma({\mathbf{g}^{1}})(V_{1})$.
 The commutativity of the diagram now follows by noticing that
 $\Sigma({\mathbf{g}^{0}})(V_0)$ represents the composition
 $(\coprod \Sigma(\mathbf{g})(L_{j}))\circ g_{0}(V)$ and
  $\Sigma({\mathbf{g}^{1}})(V_1)$
 represents the composition $g_{1}(V)\circ \Sigma(\mathbf{g})(L)$.

\

Next, we show that $\Sigma$ is a functor.  Given $\mathbf{f}, \mathbf{g}$ two morphisms in $\Pi(Ham(M))$ such that $f_1 = g_0$, we have the following composition of natural transformations:
$$
\xymatrix{ f_0(L) \ar[rr]^{\Sigma(\mathbf{f})(L)} & & f_1(L)
\ar[rr]^{\Sigma(\mathbf{g})(L)} & & g_1(L) } ~.~$$ Clearly,
$\Sigma(\mathbf{g})(L)\circ \Sigma(\mathbf{f})(L)=[[0,1] \times
L]^{(\mathbf{f}\#\mathbf{g})^{-1}}$ where $\mathbf{f}\#\mathbf{g}$
is the concatenation of the paths $\mathbf{f}$ and $\mathbf{g}$.
Given that in $\Pi(Ham(M))$ the composition of morphisms is so that
$\mathbf{f}\#\mathbf{g}=\mathbf{g}\circ \mathbf{f}$, we also have
$\Sigma (\mathbf{g}\circ \mathbf{f})(L)=[[0,1] \times
L]^{(\mathbf{f}\#\mathbf{g})^{-1}}$.

\

Finally, we need to show that $\Sigma$ is monoidal.  Let $\mathbf{f}, \mathbf{g}$ be two morphisms in $\Pi(Ham(M))$.  By the definition of the multiplication rule for natural transformations in \S \ref{section:general-non-sense2}, $\Sigma$ is monoidal if the following diagram of cobordisms commutes:
$$
\xymatrix{
g_0 f_0(L) \ar[drr]_{\Sigma(\mathbf{g} \cdot \mathbf{f})(L)} \ar[rr]^{g_0(\Sigma(\mathbf{f})(L))} & & g_0 f_1(L) \ar[d]^{\Sigma(\mathbf{g})(f_1(L))}\\
& & g_1 f_1(L)
}
$$
where $\mathbf{g} \cdot \mathbf{f}$ denotes the path $g_t  f_t\in Ham(M)$.
Notice that $\mathbf{g}\cdot \mathbf{f}$ is homotopic,
with fixed end-points, to the composition $(\mathbf{g}\cdot f_{1})\circ (g_{0}\cdot \mathbf{f})$
so that the commutativity above follows from Lemma \ref{prop:invariance} and this concludes the proof.
\end{proof}


\section{Commutativity of diagrams (\ref{eq:main-diag2}) and (\ref{eq:main-diag}).}
\label{sec:commut}
\subsection{ Proof of Theorem \ref{thm:main2}} \label{subsec:bottom}

As mentioned in the introduction, the commutativity of diagram
(\ref{eq:main-diag2}) is equivalent to the commutativity of the
diagram below:
\begin{eqnarray}\label{eq:comm3}
\begin{aligned}
   \xymatrix{
   \Pi (Ham(M))\ar[d]_{\Sigma}\ar[rrr]^{\widetilde{S}} & & &\en (T^{S}D\fuk^{d}(M))\ar[d]^{\widetilde{\mathcal{F}}^{\ast}}\\
    \en(\cob^{d}_{0}(M))\ar[rrr]_{\widetilde{\mathcal{F}}_{\ast}}& & & fun (\cob^{d}_{0}(M), T^{S}D\fuk^{d}(M))}
\end{aligned}
\end{eqnarray}
On objects this commutes in the sense that for any $g\in Ham(M)$ the following
diagram of functors commutes:
\begin{eqnarray}\nonumber
\begin{aligned}
   \xymatrix{
   \mathcal{C}ob^{d}_{0}(M) \ar[rrr]^{\widetilde{\mathcal{F}}}\ar[d]_{\Sigma (g)}& & &T^{S}D\fuk^{d}(M)\ar[d]^{\widetilde{S}(g)}\\
    \mathcal{C}ob^{d}_{0}(M)\ar[rrr]^{\widetilde{\mathcal{F}}}& & &  T^{S}D\fuk^{d}(M)}
\end{aligned}
\end{eqnarray}
This is immediate because $g$ acts on both $\mathcal{C}ob^{d}_{0}(M)$ and on $T^{S}D\fuk^{d}(M)$
by ``translating'' all the data by the Hamiltonian diffeomorphism $g$.

To prove the commutativity of (\ref{eq:comm3}) it remains to show that for any path $\mathbf{g}=(g_{t})_{t\in [0,1]}$ in $Ham(M)$ the natural transformations $\widetilde{S}(\mathbf{g})$ and $\Sigma(\mathbf{g})$ are related
by
\begin{equation}\label{eq:compo}
\widetilde{S}(\mathbf{g})\circ \widetilde{\mathcal{F}}=\widetilde{\mathcal{F}}\circ \Sigma(\mathbf{g})~.~
\end{equation}
Given that $\widetilde{\mathcal{F}}$ is monoidal this is sufficient to verify for single Lagrangians
$L\in \mathcal{O}b(\mathcal{C}ob^{d}_{0}(M))$. Recall that $\mathcal{\widetilde{F}}$ acts as the identity on
objects and that on elementary cobordisms (those that have a single positive and a single negative end)
it coincides with the linearization $\mathcal{F}$ of $\widetilde{\mathcal{F}}$.
It follows  that the proof reduces to showing  that
for all Lagrangians $L$ as before we have:
\begin{equation}\label{eq:compo2}
\widetilde{S}(\mathbf{g})(L)=\mathcal{F}(\Sigma(\mathbf{g})(L))~.~
\end{equation}

To shorten notation we now put
$$V=\Sigma(\mathbf{g})(L)=([0,1]\times L)^{\mathbf{g}^{-1}}~.~$$
Recall that $\mathcal{F}(V)\in HF(g_{0}(L),
g_{1}(L))=\mor_{D\fuk}(g_{0}(L),g_{1}(L))$ is defined as
$\mathcal{F}(V)=\phi_{V}([g_{0}(L)])$ where $[g_{0}(L)]\in
HF(g_{0}(L),g_{0}(L))$ is the unit. The morphism
$$\phi_{V}:CF(g_{0}(L),g_{0}(L))\to CF(g_{0}(L),g_{1}(L))$$ was
recalled in \S\ref{sec:ftilde} (see also \cite{Bi-Co:cob1}).

As discussed in Remark \ref{rem:movingboundarydef} -
 the term $\widetilde{S}(\mathbf{g})(L)$ coincides with the homology class $[\xi^{0}_{\mathbf{g}}]=\phi_{\mathbf{g}}([g_{0}(L)])$ where
$$\phi_{\mathbf{g}}:CF(g_{0}(L),g_{0}(L))\to CF(g_{0}(L), g_{1}(L))$$
is the moving boundary morphism (\ref{eq:moving-morph}).
Thus, to finish the proof it is enough to show the following result that
first appeared in \cite{Cha:thesis}.

\begin{lem}\label{lem:moving}
With the notation above
$\phi_{V}$ and  $\phi_{\mathbf{g}}$ induce the same morphism in homology.
\end{lem}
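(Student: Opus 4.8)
The plan is to identify both morphisms $\phi_V$ and $\phi_{\mathbf g}$ as arising from moduli spaces of $J$-holomorphic strips with a moving Lagrangian boundary condition, and then to exhibit a deformation of one family of moduli problems into the other, producing a chain homotopy. The key point is that, after unfolding the definitions, the two morphisms differ only by where the ``moving'' happens: in $\phi_{\mathbf g}$ the strip $u:\R\times[0,1]\to M$ has its boundary $u(s,1)$ travelling along $g_{\psi(s)}(L)$ with $\psi$ a monotone reparametrization function, while the boundary $u(s,0)$ stays on $g_0(L)$; in $\phi_V$ the strip lives in $\R^2\times M$ with one boundary on the suspension cobordism $V=([0,1]\times L)^{\mathbf g^{-1}}$ and the other on a curve $\gamma\times L$. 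The suspension $V$ is, by construction~\eqref{eq:isotopy1}, the graph over the $t$-interval of the family $g_{1-t}(L)$ (pushed by the Hamiltonian), so projecting the strip equation in $\R^2\times M$ onto $M$ converts the $V$-boundary condition into precisely a moving boundary condition along $g_t(L)$, with the profile function now dictated by the shape of $\gamma$ and the cobordism's ends rather than by $\psi$.

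**First I would** make this correspondence precise by choosing the curve $\gamma\subset\R^2$ and the perturbation data defining $\phi_V$ so that the projection to $M$ of the strips counted by $\phi_V$ satisfies a Floer equation of exactly the same type as the one defining $\phi_{\mathbf g}$, but with a different monotone profile $\psi'$. Here I would use that the $\R^2$-factor carries no holomorphic curves with the relevant boundary (a standard open mapping / maximum principle argument, as in the construction of $\mathcal F$ in \cite{Bi-Co:cob1}), so that a strip in $\R^2\times M$ with these boundary conditions is equivalent to its $M$-projection together with trivial data in $\R^2$; this is the step where one must be careful that the $y$-shift $G_{t}(g_t(\cdot))$ in~\eqref{eq:isotopy1} does not obstruct the maximum principle, but since $G$ vanishes near the ends and $\gamma$ can be taken to match the ends of $V$, this goes through as in the cobordism papers. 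Next I would invoke the standard fact that two moving-boundary Floer morphisms built from two different monotone profiles $\psi$, $\psi'$ joining the same endpoint configurations $g_0(L)$ and $g_1(L)$ are chain homotopic: one interpolates $\psi_\tau$ between $\psi$ and $\psi'$, $\tau\in[0,1]$, and counts the resulting $1$-dimensional parametrized moduli space, whose boundary yields the homotopy. Monotonicity plus Gromov compactness guarantee the relevant moduli spaces are compact up to breaking and bubbling of the usual codimension-one type, and condition~\eqref{eq:Hlgy-vanishes} together with uniform monotonicity rule out the problematic configurations, exactly as in \cite{Bi-Co:rigidity} and \cite{Bi-Co:qrel-long}.

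**The hard part will be** setting up the parametrized moduli space interpolating the two boundary profiles with coherent, regular perturbation data, and checking that no new breaking phenomena appear at the ends of the $\tau$-interval beyond the two morphisms themselves — in particular ruling out ``figure-eight'' or boundary-bubble degenerations that could occur when the moving boundary sweeps past $g_0(L)$ near $s=-\infty$. This is handled by the usual dimension count combined with the a priori energy bound coming from monotonicity: a breaking piece would be either a Floer strip for a constant boundary condition (a differential contribution, accounted for in the chain-homotopy relation) or a disk/sphere bubble, whose Maslov/Chern number obstruction is excluded by $N_L\ge 2$ and uniform monotonicity. Once these compactness and transversality issues are dispatched, the count of the parametrized moduli space gives an operator $K$ with $dK+Kd=\phi_V-\phi_{\mathbf g}$, which is the assertion; evaluating on the unit $[g_0(L)]$ then gives the promised equality $\mathcal F(V)=[\xi^0_{\mathbf g}]=\widetilde S(\mathbf g)(L)$ needed to close the argument for~\eqref{eq:compo2}.
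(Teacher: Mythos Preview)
Your proposal contains a genuine gap at the reduction step. You claim that by projecting the strips defining $\phi_V$ from $\C\times M$ down to $M$ one obtains a Floer moving-boundary problem with some fixed monotone profile $\psi'$, after which the comparison with $\phi_{\mathbf g}$ becomes the routine ``two profiles are homotopic'' argument. This reduction does not go through. The suspension $V=([0,1]\times L)^{\mathbf g^{-1}}$ projects to a two-dimensional region in $\C$ (because of the $y$-shift $G_t(g_t(\cdot))$ in~\eqref{eq:isotopy1}), so for a strip $u:\R\times[0,1]\to\C\times M$ with $u(s,1)\in V$ the $t$-coordinate of $\pi_\C(u(s,1))$ is not a function of $s$ alone: it depends on the particular solution $u$. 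Consequently $\pi_M(u(s,1))\in g_{1-t(u(s,1))}(L)$ is a moving boundary condition whose profile varies from solution to solution, and you cannot package $\phi_V$ as a single fixed-profile continuation map in $M$. Relatedly, your assertion that ``the $\R^2$-factor carries no holomorphic curves with the relevant boundary'' is not what the open mapping argument in \cite{Bi-Co:cob1} says; that argument gives confinement of the $\C$-projection to a compact region (hence energy bounds and compactness), not triviality of the projection.

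The paper avoids this problem by never projecting to $M$. Instead it builds a one-parameter family of cobordisms $V_s\subset\C\times M$, with $V_0=[0,1]\times g_0(L)$ trivial and $V_1=V$, coming from a Hamiltonian isotopy $\Psi_s$ of $\C\times M$ associated to the homotopy $h(s,t)=g_{s(1-t)}$. One then forms the two Floer complexes $\mathcal C_i=CF(\gamma\times g_0(L),\,V_{i-1})$ in $\C\times M$; their differentials are lower-triangular $2\times2$ matrices with off-diagonal entries $1$ and $\phi_V$ respectively (by the standard bottleneck analysis). A continuation map $\eta_\Psi:\mathcal C_1\to\mathcal C_2$ is defined by a moving-boundary count \emph{in $\C\times M$}, with $u(s,1)\in V_{\alpha(s)}$; its matrix has $1$ in the upper-left and $\phi_{\mathbf g}$ in the lower-right (the latter because near the negative bottleneck the moving boundary $V_{\alpha(s)}$ restricts, in the fibre, exactly to $g_{\alpha(s)}(L)$). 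The chain-map identity $D_2\eta_\Psi=\eta_\Psi D_1$ then forces the lower-left entry $\mu$ to satisfy $d_1\mu+\mu d_0=\phi_V-\phi_{\mathbf g}$, which is the desired chain homotopy. So the interpolation does happen, but at the level of cobordisms in $\C\times M$, not at the level of profile functions in $M$.
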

\noindent {\em Proof of Lemma \ref{lem:moving}.}
We will prove that for adequate choices of Floer data and other auxiliary data, the morphisms $\phi_{V}$ and $\phi_{\mathbf{g}}$ are chain homotopic.

Consider a one parameter family of paths $h(s,t):= g_{s(1-t)} \in
Ham(M)$, $s\in [0,1]$.
This gives a homotopy  between the constant path $\mathbf{g}_{0}=(g_0)$ and the path $\mathbf{g}$.  Obviously, the $1$ endpoint of this
homotopy is not fixed.  We now follow the general scheme in the proof of  Lemma \ref{prop:invariance} to define a global Hamiltonian
isotopy $\Psi: [0,1]\times T^{\ast}[0,1]\times M \to T^{\ast}[0,1]\times M$, $$\Psi(s,t,y,m) = (t, y + H(s,t, h(s,t)(m)), h(s,t)(m))$$  as in (\ref{eq:glob-iso}).
After an appropriate reparametrization of $h(s,t)$ we can again assume that $H(s,-,-):[0,1]\times M\to \R$ vanishes for $t$ sufficiently close to the ends of the interval $[0,1]$. The same argument as in the proof of
Lemma \ref{prop:invariance} shows that $\Psi$ is indeed Hamiltonian.
The isotopy $\Psi=\{\Psi_{s}\}_{s \in [0,1]}$
verifies:
\begin{itemize}
    \item[-] $\Psi_0 = \id \times g_0$;
    \item[-] $\Psi_1([0,1]\times L)  = \Sigma(\mathbf{g})(L)$;
    \item[-] $\Psi_{s}|_{\{1\} \times \{ 0 \} \times M} = g_0$;
    \item[-] $\Psi_s |_{\{0\} \times \{ 0 \} \times M} = g_s$.
\end{itemize}
In particular, $\Psi|_{[0,1]\times L}$ is not horizontal. We now
denote $V_{s}=\Psi_{s}([0,1]\times L)$ so that $V_{0}=[0,1]\times
g_{0}(L)$, $V_{1}=V=\Sigma(\mathbf{g})(L)$. We also extend all these
cobordisms trivially to $\C\times M$. Recall that $\pi :\C\times
M\to \C$ is the projection. The set $\pi(\bigcup_{s \in [0,1]} V_s)$
is contained in a compact $K\subset T^{\ast}[0,1]$ so that there
exists a curve $\gamma$ as in the picture below, with $\gamma
\cap\R\times \{0\}= \{(-1,0)\}\cup \{2,0)\}=\gamma \cap \pi(V_{s})$
for all $s\in [0,1]$,  and so that $\gamma$ intersects $\R\times
\{0\}$ transversally. The two points $Q=(-1,0)$ and $P=(2,0)$ will
be referred to as the ``bottlenecks'' (see \cite{Bi-Co:cob2}).

\begin{figure}[ht]\label{fig:intertime01}
    \centering
    \subfloat[Intersection of $V'=\gamma \times g_0(L)$ with $V_{0}=\R \times g_{0}( L)$]{\includegraphics[width=8cm]{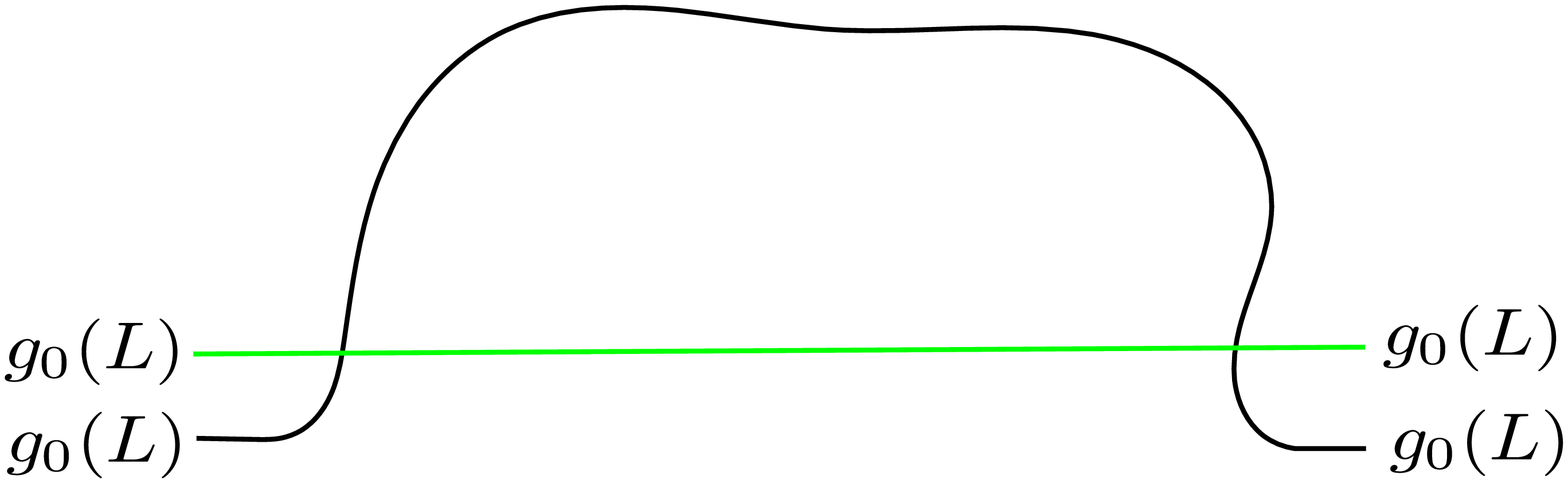}} \quad
    \subfloat[Intersection of $V'$ with $V_{1}=V=\Sigma(\mathbf{g})(L)$]{\includegraphics[width=8cm]{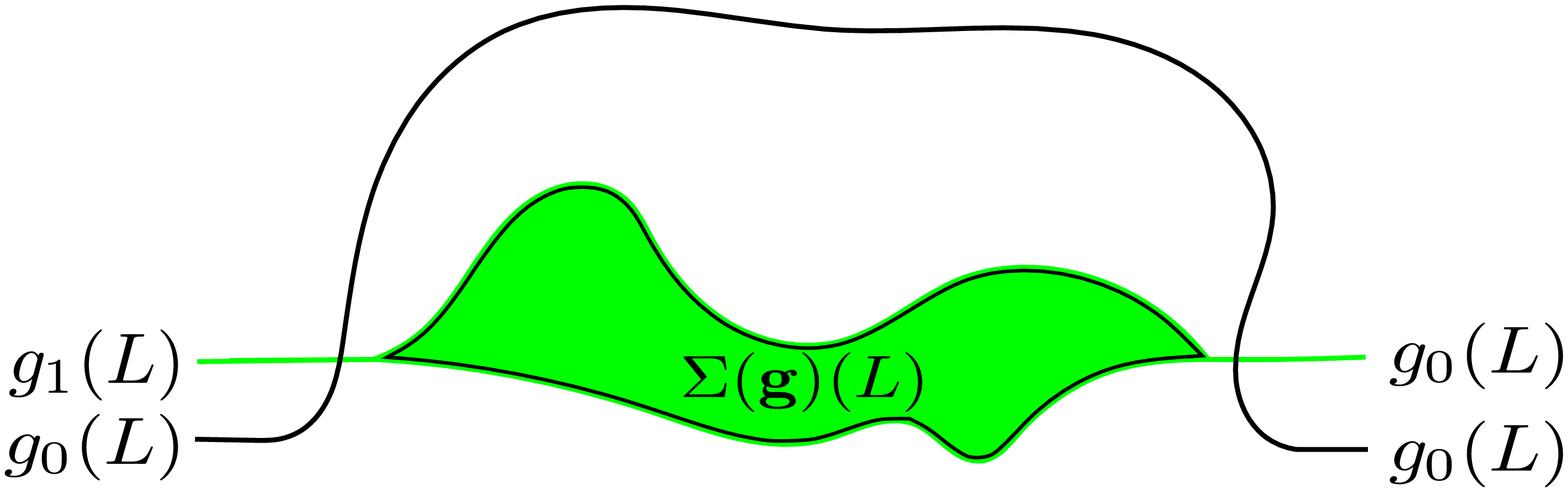}}
\label{fig:intertime01}\end{figure}

We let $V'=\gamma\times g_{0}(L)$ and consider the Floer complexes
$$\mathcal{C}_{1}=(CF(V', V_{0}), D_{1})\ \mathrm{and}\ \mathcal{C}_{2}=(CF(V', V), D_{2})~.~$$
The complex $\mathcal{C}_{2}$ is defined as in \cite{Bi-Co:cob1}
and \cite{Bi-Co:cob2} by first choosing Floer data for the pairs $(g_{0}(L), g_{0}(L))$  and $(g_{0}(L), g_{1}(L))$  in $M$. This data is used in the fibers of $\pi$ over the bottlenecks   and is then
 extended to the pairs $(V',V_{0})$ and $(V',V_{1})$, away from the bottlenecks,  by using an almost complex structure on $\C\times M$ so that the projection $\pi:\C\times M\to \C$ is holomorphic outside of $K$. For $\mathcal{C}_{1}$ we use the same procedure but
additionally it is easy to see
that one can work with an almost complex structure on $M\times \C$
so that $\pi$ is globally holomorphic. The construction here is in fact simpler than that in \cite{Bi-Co:cob1} because all the pairs $(V',V_{s})$
are cylindrically distinct at infinity.
By the same arguments as in \cite{Bi-Co:cob1} we deduce
 that the form of the two differentials appearing here is:
$$
D_1 =
\begin{pmatrix}
d_{0} & 0\\
1 & d_{0}
\end{pmatrix}\ \  ,
\qquad D_2 =
\begin{pmatrix}
d_{0} & 0\\
\phi_V & d_{1}
\end{pmatrix},
$$
where $d_{0}$ is the differential in $CF(g_{0}(L), g_{0}(L))$ and
$d_{1}$ is the differential in $CF(g_{0}(L),g_{1}(L))$.

We now define a chain morphism
$$\eta_{\Psi}:\mathcal{C}_{1}\to \mathcal{C}_{2}$$
by counting finite energy Floer trajectories $u(s,t):\R\times [0,1]\to \C\times M$ whose planar projection starts in $P$ and ends in $Q$ and so that
$u$ is subject to moving boundary conditions:
$$u(s,0) \in V_{0}, \qquad u(s,1) \in V_{\alpha(s)}$$
where $\alpha:\R\to [0,1]$ is a  smooth increasing  function  equal to $0$ for $s$ sufficiently small and equal to $1$ for $s$ sufficiently large.
This construction is perfectly similar to many others in \cite{Bi-Co:cob1}
but for completeness we review it rapidly here.
We define  the relevant moduli space by again using an almost complex structure that projects holomorphically away from a compact set.
An application of the
open mapping theorem together with the fact that
 $V_{s}$ is independent of $s$ around the bottlenecks  shows rapidly that the projections of the curves $u$ onto $\C$ remain inside a compact set $\subset \C$. Combined with the fact that the Hamiltonians $H_{s,t}$ have a finite variation on $M$, we obtain a priori energy bounds and this
implies the compactness of the relevant moduli space, up to Floer splitting.
Finally, regularity is achieved by standard perturbative methods (see possibly  \cite{Bi-Co:cob2}).
In summary, we deduce that the map $\eta_{\Psi}$ is a chain map given by a matrix:
$$\eta_{\Psi}: CF(g_{0}(L), g_{0}(L))_{P} \oplus CF(g_{0}(L),g_{0}(L)) _{Q}\to CF(g_0(L), g_0(L))_{P} \oplus CF(g_0(L), g_1(L))_{Q}$$
\begin{equation}\label{eq:matrixphi}
\begin{pmatrix}
 1 & 0 \\
 \mu & \phi_\mathbf{g}
\end{pmatrix}.
\end{equation}
The indexes $P$ and $Q$ in this formula indicate to which one of the two bottlenecks correspond the respective complexes.
The one fact that is remarkable here is that $\phi_{\mathbf{g}}$ appears
in the lower right corner of this matrix - this is due directly to the moving boundary definition of $\phi_{\mathbf{g}}$.
The equation $D_{2}\eta_{\Psi}=\eta_{\Psi} D_{1}$ implies that
$\mu$ is a chain homotopy between $\phi_{V}$ and $\phi_{\mathbf{g}}$ and concludes the proof.
\qed

\subsection{The Corollary \ref{thm:main}: commutativity of the top
square in (\ref{eq:main-diag}).}\label{subsec:top}
We only provide a sketch of the proof of the commutativity of the top square in (\ref{eq:main-diag}) as the result is known by experts.
A variant of this commutativity,
with the Donaldson category of $M$ in the place of $T^{S}D\fuk^{d}(M)$, is contained
in \cite{Hu-La-Le:monodromy}.

\begin{prop} The top square in diagram (\ref{eq:main-diag}) commutes.
\end{prop}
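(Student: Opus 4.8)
The plan is to unwind all four compositions in the top square of (\ref{eq:main-diag}) on the single object and on an arbitrary morphism $[\mathbf{g}]\in\pi_1(Ham(M))$, and to identify the two resulting natural transformations of $\id_{T^SD\fuk^d(M)}$. Since both sides are monoidal functors into $\en(T^SD\fuk^d(M))$ with source a one-object category, and since (by Remark \ref{rem:general-non-sense0}) a natural transformation of the identity on $T^SD\fuk^d(M)$ is determined by a natural isomorphism of the identity on $D\fuk^d(M)$, and that in turn is determined by its restriction to the Donaldson category $\mathcal{D}on^d(M)$, it suffices to check the identity on the Donaldson category, one Lagrangian $L\in\mathcal L^*_d(M)$ at a time. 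Going along the top and then the right edge, $[\mathbf g]$ is sent to $S(\mathbf g)\in QH(M)^*$ and then to the action $\ast$, whose value at $L$ is the module multiplication $S(\mathbf g)\ast[L]\in QH(L)\cong HF(L,L)$. Going along the left and then the bottom edge, $[\mathbf g]$ is sent via $i$ to its class in $\Pi(Ham(M))$ and then via $\widetilde S$ to the natural transformation whose $0$-th component at $L$ is the class $[\xi^0_{\mathbf g}]\in HF(g_0(L),g_1(L))=HF(L,L)$ (the last identification using the construction of $\fuk^d(M)^{free}$, as $g_0=g_1$ for a loop). So the Proposition reduces to the identity
\begin{equation}\nonumber
[\xi^0_{\mathbf g}] \;=\; S(\mathbf g)\ast [L] \qquad \text{in } HF(L,L).
\end{equation}

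The main step is to prove this last identity, and this is exactly the ``closed--open'' statement alluded to in Remark \ref{rem:open-closed}: the Seidel element $S(\mathbf g)$, pushed into $HF(L,L)$ through the module action, equals the moving-boundary class $[\xi^0_{\mathbf g}]$. I would prove it by a neck-stretching / cobordism-of-moduli-spaces argument interpolating between the two pictures. On one side we have the ambient moduli space defining $S(\mathbf g)$ (a sphere soft with the loop $\mathbf g$ acting on the strip-like ends, i.e. the naturality-plus-continuation composite $N\circ\psi$ applied to $[M]$, cf. \S\ref{subsubsec:top-funct}); on the other the moduli space of disks $u:S'_r\to M$ with boundary on the moving family $g_t(L)$ defining $\xi^0_{\mathbf g}$, combined via Remark \ref{rem:movingboundarydef} with the PSS map $QH(M)\to QH(L)$ and the module structure. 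Using the description in Remark \ref{rem:movingboundarydef}, $[\xi^0_{\mathbf g}]=\phi_{\mathbf g}(PSS_M([M])\ast[L])$ after bubbling off the $M$-constraint, and the point is that $\phi_{\mathbf g}$ composed with $PSS_M$ realizes precisely the composite $N\circ\psi$ from the definition of $S$. Concretely: degenerate the interior marked point of the disks counted by $\eta_\alpha$-type spaces so that the closed curve through $c_\alpha$ splits off, turning the disk-with-moving-boundary into (a continuation sphere computing $S(\mathbf g)$)$\,\ast\,$(a quantum-module cap on $L$). This is the standard mechanism by which the closed--open map intertwines the quantum module action with the Seidel action; I would organize it as a single compactness/gluing argument on a parametrized moduli space, exactly in the spirit of Lemma \ref{lem:moving}.

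The hard part will be the bookkeeping in this interpolation: matching the auxiliary data (almost complex structures $J$ versus $\tilde J=(g_t)_*J$, Hamiltonian perturbations, the interior marked-point constraints, and the coherent choices in the Deligne--Mumford--Stasheff associahedron used to build $\xi_{\mathbf g}$) on the two ends of the cobordism of moduli spaces, and checking transversality and a priori energy bounds for the interpolating spaces — monotonicity and Gromov compactness handle the latter as in \cite{Bi-Co:rigidity}, but one must be careful that no Maslov-$\le 0$ disk bubbling or sphere bubbling of the wrong dimension occurs, and that the module action $\ast$ being used is exactly the one from \cite{Bi-Co:qrel-long} that enters $\ast$ in the diagram. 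Once the identity $[\xi^0_{\mathbf g}]=S(\mathbf g)\ast[L]$ is established naturally in $L$, monoidality of both routes (the multiplicativity $S(\mathbf f\#\mathbf g)=S(\mathbf f)\ast S(\mathbf g)$ from \cite{Se:pi1} on one side, $[\hat\eta_{\alpha\ast\beta}]=[\hat\eta_\alpha]\circ[\hat\eta_\beta]$ on the other) together with the fact that both functors agree on objects forces the whole top square to commute, and the reference \cite{Hu-La-Le:monodromy} gives the Donaldson-category shadow of the same statement as a consistency check.
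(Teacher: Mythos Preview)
Your reduction step contains a genuine gap. You assert that a natural isomorphism of the identity on $D\fuk^{d}(M)$ is determined by its restriction to the Donaldson category, so that it suffices to verify $[\xi^{0}_{\mathbf g}]=S(\mathbf g)\ast[L]$ for each Lagrangian $L$. This is not justified and is generally false: in a triangulated category, a natural transformation of the identity is \emph{not} determined by its values on a generating set, because the extension of a map to a cone is not unique (given an exact triangle $L'\to X\to TL$ and two natural transformations $\eta,\eta'$ agreeing on $L'$ and $TL$, the difference $\eta_{X}-\eta'_{X}$ can be any element of the form $(L'\to X)\circ\phi\circ(X\to TL)$ with $\phi:TL\to L'$). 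Equivalently, at the $A_{\infty}$ level, the map $H\mor_{fun(\fuk^{d}(M),\fuk^{d}(M))}(\id,\id)\to\prod_{L}HF(L,L)$ picking out the $0$-th components is the edge map of a spectral sequence and has no reason to be injective. The paper therefore does \emph{not} reduce to the $0$-th component: it shows the full identity
\[
[\xi_{\mathbf g}]=[\eta_{S(\mathbf g)}]\in H\,fun(\fuk^{d}(M),\fuk^{d}(M)),
\]
which means matching all the higher $\xi^{k}_{\mathbf g}$ against all the $\eta^{k}_{S(\mathbf g)}$ simultaneously, over surfaces with $k+1$ boundary punctures for every $k\geq 0$.

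As for the mechanism of the identification itself, the paper's route is different from, and cleaner than, the neck-stretching you sketch. Rather than degenerating the interior marked point to split off a sphere, the paper (i) replaces the interior \emph{marked point} on $S'_{r}$ by an interior \emph{puncture} with Hamiltonian Floer data $(K,J)$ around it, and uses a PSS-type interpolation to show $[\xi_{\mathbf g}]=[\tilde\xi_{PSS([M]),\mathbf g}]$ and $[\eta_{\alpha}]=[\tilde\eta_{PSS(\alpha)}]$; then (ii) it writes down an explicit naturality transformation $\phi(u)(z)=g^{-1}_{a_{S''_{r}}(z)}(u(z))$, for coherent functions $a_{S''_{r}}:S''_{r}\to[0,1]$, which is a \emph{bijection of moduli spaces} $\mathcal M_{K}\to\mathcal M'_{H'}$ turning moving-boundary conditions into fixed ones and the internal asymptotic $\gamma$ into $g^{-1}_{s}\gamma(s)$. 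This directly produces the Seidel element (via the defining composite $N\circ\psi$) on the $\tilde\eta$ side with no neck-stretching or splitting-off argument. Your cobordism-of-moduli-spaces picture could perhaps be made to work for $k=0$, but it does not obviously organize into a coherent homotopy across all $k$ compatible with the Deligne--Mumford--Stasheff associahedra, which is exactly what is needed to close the gap above.
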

\begin{proof}
Fix $\mathbf{g}=\{g_{t}\}$, $g_{t}\in Ham(M)$, a loop of Hamiltonian
diffeomorphisms. Thus, $g_{0}=g_{1}=\id$. By the description in
\S\ref{sec:ingred} of the various functors involved, the desired
commutativity follows if we show that the two natural
transformations $\xi_{\mathbf{g}}$ and $\eta_{{S[\mathbf{g}]}}$ of
$\id|_{\fuk^{d}(M)}$ have the same image in the homology category $H
fun(\fuk^{d}(M),\fuk^{d}(M))$,
\begin{equation}\label{eq:hlgy-eq}
[\xi_{\mathbf{g}}]=[\eta_{{S[\mathbf{g}]}}]\in H fun(\fuk^{d}(M),\fuk^{d}(M))~.~
\end{equation}
The outline of the proof of (\ref{eq:hlgy-eq}) is as follows.  We
first rewrite  both $[\xi_{\mathbf{g}}]$  and
$[\eta_{S[\mathbf{g}]}]$ in terms of moduli spaces formed of curves
$u$ and, respectively, $v$ defined on  surfaces $S''_{r}$ with $k+1$
boundary punctures and one internal puncture and appropriate
asymptotic conditions and boundary conditions (these conditions are
different for the $u$'s compared to those of the $v$'s). Then, in a
second step, we  relate the curves $u$ and $v$ by a geometric
``naturality transformation'' induced by the map $u(z) \to
g^{-1}_{a(z)}(u(z))$, where $a(-)$ is an appropriate map $a=
a_{S''_{r}}: S''_{r} \to [0,1]$.

To proceed we first define the punctured surfaces $S''_{r}$ in more detail. They are just as
the surfaces $S'_{r}$ used in \S\ref{subsubsec:top-funct} b. except that the point $z_{S'_{r}}$ is replaced by a puncture.
We will use the $S''_{r}$'s  as domains of curves $u:S''_{r}\to M$ that satisfy the usual
 perturbed $J$-holomorphic equation and boundary conditions but with an additional property
relative to the internal puncture. More precisely we will assume
that, in the same way as strip-like ends are fixed for the boundary
punctures, there is a universal choice of cylindrical-like ends
around the internal punctures.  The purpose of the choice of cylindrical-like ends is
the following. Suppose that $K:S^{1}\times M\to \R$ is a generic
periodic Hamiltonian and assume $J$ is an almost complex structure
so that $(K,J)$ is regular in the sense of Hamiltonian Floer
homology. We can then write the correct perturbed $J$-holomorphic
equation for $u:S''_{r}\to M$ by using the strip-like ends around
the boundary punctures , as well as the Floer data as in the
construction of the Fukaya category and using the Hamiltonian Floer
data $(K, J)$ around the internal puncture.  We use this setup to
define two pre-natural transformations of the identity on
$\fuk^{d}(M)$ associated to a Floer cycle $\beta \in CF(K,J)$. The
first one is denoted by $\tilde{\eta}_{\beta}$ and is defined as
follows. We first define
 $$\tilde{\eta}^{k} :CF(L_{1},L_{2})\otimes\ldots \otimes CF(L_{k},L_{k+1}) \otimes CF(K,J)\to
 CF(L_{1},L_{k+1}),$$  given by counting curves $u$ as before
 with boundary conditions so that  $u(C_{i})\subset L_{i}$ where $L_{1}, \ldots, L_{k+1}\in \mathcal{O}b(\fuk^{d}(M))$ and with asymptotic conditions corresponding to the generators of $CF(L_{i},L_{i+1})$ for the $i$-th
 boundary puncture and to a generator of $CF(K,J)$ for the internal puncture. We then
 put $\tilde{\eta}_{\beta}= \tilde{\eta}(-,\ldots, -,\beta)$.

The second pre-natural transformation is denoted
$\tilde{\xi}_{\beta,\mathbf{g}}$ and its definition involves also a
fixed loop inside $Ham(M)$, $\mathbf{g}=\{g_{t}\}$, $t\in [0,1]$,
with $g_{0}=g_{1}=\id$. The definition is exactly as in the case of
$\tilde{\eta}$ with the difference that the boundary conditions are
now ``moving'', as in the definition of $\xi_{\mathbf{g}}$ in
\S\ref{subsubsec:top-funct}. In other
 words, in this case we have that along the segment $C_{j}$ of the boundary of $S''_{r}$, $u(e^{-2 i \pi s})\in g_{s}(L_{j})$ for $s_{j}\leq s\leq s_{j+1}$ (if we use a parametrization with the output at $(1,0)$ and the internal
 puncture at $(0,0)$).

Next we compare $\xi_{\mathbf{g}}$ and $\tilde{\xi}_{PSS([M]),
\mathbf{g}}$ where $[M]$ is the fundamental class of $M$ represented
as the maximum of a Morse function $f:M\to \R$ and $PSS([M])\in
CF(K,J)$ is the image of this maximum by the PSS morphism (we assume
a generic metric fixed on $M$ so that the negative gradient of $f$
is Morse-Smale).  To compare these two transformations we notice
that the moduli spaces of curves $u:S'_{r}\to M$ used to define
$\xi_{\mathbf{g}}$ can be thought as well as being pairs $(u, l)$
where $l$ is a negative gradient flow line of $f$ that joins the
maximum of $f$ to the point $u(z_{S'_{r}})$. Because of this we can
use the PSS-method  to define some interpolating moduli spaces that
define a prenatural transformation $\tau$ with the property that
$\mu^{1}(\tau)= \xi_{\mathbf{g}}- \tilde{\xi}_{PSS([M]),
\mathbf{g}}$ in the functor $A_{\infty}$-category
$fun(\fuk^{d}(M),\fuk^{d}(M))$ (to be more precise we should work
here with the free category $\fuk^{d}(M)^{free}$ but we leave these
details to the reader). In other words, the homology classes
$[\xi_{\mathbf{g}}]$ and $[\tilde{\xi}_{PSS([M]), \mathbf{g}}]$
coincide.

 We proceed in a perfectly similar way to show that, when $\alpha\in C(f)$ is a cycle,
 then $\eta_{\alpha}$ is homologous to $\tilde{\eta}_{PSS(\alpha)}$.

 To end the proof we need to show that \begin{equation}\label{eq:hlgical-id2}[\tilde{\xi}_{PSS([M]), \mathbf{g}}]=
 [\tilde{\eta}_{PSS(S(\mathbf{g}))}].
 \end{equation}
 From the discussion above we know already that, in homology, both natural transformation do not depend on the choice of Hamiltonian $K$.

To show (\ref{eq:hlgical-id2}), denote the moduli spaces used in the
definition of $\tilde{\xi}_{PSS([M]), \mathbf{g}}$ by
$\mathcal{M}_{K}$ and the moduli spaces used in the definition of
$\tilde{\eta}_{PSS(S(\mathbf{g}))}$ by $\mathcal{M}_{K}'$.
Certainly, these moduli spaces depend on various data and choices
that we omit from the notation. The important point is that we will
define a homeomorphism $\phi : \mathcal{M}_{K}\to \mathcal{M'}_{H'}$
where $H'(t,x)=G(t, g_{t}(x)) + K(t,g_{t}(x))$ and $G$ generates
$\mathbf{g}$, so that our transformation transports admissible data
on one side to admissible data on the other.
 We will denote the elements of $ \mathcal{M}_{K}$ by $u$ and the elements of $\mathcal{M'}_{H'}$ by
$u'$. The expression of $\phi(u)$ is
\begin{equation}\label{eq:nat}
\phi(u)(z)=g^{-1}_{a_{S''_{r}}(z)}(u(z)),
\end{equation}
 where $a_{S''_{r}}:S''_{r}\to [0,1]$
are functions so that:
\begin{itemize}
\item[i.] On each boundary component $C_{i}$ of $\partial S''_{r}$
the moving boundary conditions along $g_{s}(L_{i})$ are transformed to fixed ones along $L_{i}$.
\item[ii.] The asymptotic orbit $\gamma$ corresponding to the internal puncture is transformed
by $\gamma(s)\to g^{-1}_{s}(\gamma(s))$.
\end{itemize}
Of course, these maps $a_{S''_{r}}$ have to be defined also in a way
coherent with gluing and splitting of the Deligne-Mumford-Stasheff
associahedron. It is easy to see that such a system of functions
$a_{S''_{r}}$ can be found. For instance, assuming that $S''_{r}$ is
a disk with $k+1$ boundary punctures parametrized so that the output
is at the point $(1,0)$ and the internal puncture is at the point
$(0,0)$, then we can take $a_{S''_{r}}(r e^{-2 i \pi s})=s$.  It is
well-known that a transformation as in (\ref{eq:nat}) takes elements
in the moduli space $\mathcal{M}$ to elements in $\mathcal{M}'$
which are defined in terms of the ``transported'' almost complex
structures, Floer data, etc. In view of the definition of
$S(\mathbf{g})$ from \S \ref{subsubsec:top-funct}, this shows that,
with choices of data related by a transformation as in
(\ref{eq:nat}), the two natural transformations
$\tilde{\xi}_{PSS([M]), \mathbf{g}}$ and
 $\tilde{\eta}_{PSS(S(\mathbf{g}))}$ agree even at the chain level and this implies (\ref{eq:hlgical-id2}).
 \end{proof}

\section{Some calculations and examples.}\label{sec:examples}
The main purpose of this section, in \S\ref{subsec:toric-cal} is to provide a class of
Lagrangians $L$ so that the monoid $\mor_{\cob^{d}_{0} }(L,L)$ (the
operation is concatenation of cobordisms) is nontrivial and to get
some ``lower bound'' estimate for its size. 
Notice that the morphisms in $\cob^{d}_{0}$ from $L$ to $L$ are
represented by elementary cobordisms  ($L$ is viewed here as a
family with a single element).

Along the way, we will
make explicit some of the maps in diagram (\ref{eq:main-diag}).
Moreover, in \S\ref{subsubsec:flavors-Seidel} 
we summarize seven various descriptions,
(from the literature and/or this paper) of the Lagrangian Seidel morphism in its simplest form which is the Donaldson category version of $\widetilde{S}$ from diagram (\ref{eq:main-diag}), for a fixed Lagrangian
 $L$.

\subsection{Toric calculations.} \label{subsec:toric-cal}
Our ambient manifold $M^{2n}$ is, all along this sub section, toric, Fano
with minimal Chern number at least $2$.  We denote by $G_{M}$ the subgroup
of the group of  units of $QH(M)$ ($QH(M)$ is ungraded here)
generated by the divisors associated to the maximal faces of the Delzant polytope (the definition
will be made explicit in \S\ref{subsec:toric-facts}).

A toric manifold $(M,\omega)$ admits a canonical anti-symplectic
involution $\tau:M\to M$ that leaves the moment map invariant and
that maps orbits of the torus action to orbits.  The fixed point set
of the involution is the {\em real} Lagrangian $L_\R :=
\text{Fix}(\tau)$. It is not difficult to see \cite{Haug:realtoric}
that our assumption that $M$ is Fano with minimal Chern class at
least two implies that $L_{\R}$ is monotone.

\begin{maincor}\label{cor:toric} With the notation and under the assumptions above we have:
\begin{itemize}
\item[i.] Any monotone, non-narrow Lagrangian $L\subset M$
 (thus, so that $QH(L)\not=0$) with $N_{L}\geq 3$ has the property that
 $\mor_{\cob^{d}_{0}}(L,L)$ is nontrivial.
\item[ii.] The image of the monoid morphism
$$\mathcal{F}_{L_{\R}}:\mor_{\cob^{d}_{0}}(L_{\R},L_{\R})\to QH(L_{\R})^{\ast}$$
- obtained by  restriction from the bottom line of diagram (\ref{eq:main-diag}) -
 contains a group isomorphic to  $G_{M}$.
 \end{itemize}
\end{maincor}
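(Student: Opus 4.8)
The plan is to read off both statements from the commutativity of diagram~(\ref{eq:main-diag}), together with two external inputs: the identification of the Seidel elements of the facet circle actions with the toric divisors, and the description of $QH(L_\R)$ as a module over $QH(M)$. First, a toric manifold is simply connected, so condition~(\ref{eq:Hlgy-vanishes}) is automatic for every Lagrangian in $M$ and every cobordism in $\C\times M$; hence all the Lagrangians and cobordisms used below are legitimate objects and morphisms of $\cob^d_0(M)$. For a loop $\mathbf{g}\in\pi_1(Ham(M))$ and a Lagrangian $L$, Proposition~\ref{prop:Susp-funct} produces an \emph{elementary} cobordism $\Sigma(\mathbf{g})(L)\in\mor_{\cob^d_0}(L,L)$, and combining property~(iii) of $\widetilde{\mathcal{F}}$ (agreement with $\mathcal{F}$ on elementary cobordisms) with the commutativity of~(\ref{eq:main-diag2}) and of the top square of~(\ref{eq:main-diag}), one obtains
\begin{equation}\label{eq:detect}
\mathcal{F}\bigl(\Sigma(\mathbf{g})(L)\bigr)=\widetilde{S}(\mathbf{g})(L)=S(\mathbf{g})\ast[L]\ \in\ HF(L,L)\cong QH(L)~,~
\end{equation}
whereas $\mathcal{F}(\R\times L)=[L]$, the trivial cobordism being the identity morphism. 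Thus $\Sigma(\mathbf{g})(L)$ is not horizontally isotopic to $\R\times L$ as soon as $S(\mathbf{g})\ast[L]\neq[L]$. Finally, by the theorem of McDuff and Tolman on Hamiltonian circle actions on Fano toric manifolds, each divisor $D_i$ associated to a facet of the Delzant polytope has the form $D_i=S(\Lambda_i)$ for a suitable Hamiltonian circle action $\Lambda_i$ (in the sign convention of the paper one uses $\Lambda_i^{-1}$); since $S\colon\pi_1(Ham(M))\to QH(M)^\ast$ is a group morphism, its image is a subgroup containing every $D_i$, so $G_M\subseteq\mathrm{Im}(S)$.

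For~(i), fix any facet divisor $D=S(\mathbf{g})$ and put $V=\Sigma(\mathbf{g})(L)$, so that $\mathcal{F}(V)=D\ast[L]$ by~(\ref{eq:detect}). Since $D$ is a divisor, $D\ast[L]=D\cap[L]+(\text{quantum corrections})$, with $D\cap[L]\in H_{n-2}(L)$ the classical intersection class, while each quantum correction comes from a configuration of total Maslov index a positive multiple $kN_L$ of $N_L$ and therefore contributes a class of homological degree $n-2+kN_L$; as $N_L\geq 3$ we have $n-2+kN_L>n$ for $k\geq1$, so these vanish and $D\ast[L]=D\cap[L]\in H_{n-2}(L)$. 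Since the fundamental class $[L]$ is a nonzero element of $H_n(L)$ (non-narrowness) and $n-2\neq n$, we conclude $D\ast[L]\neq[L]=\mathcal{F}(\R\times L)$, so $V$ is not horizontally isotopic to $\R\times L$ and $\mor_{\cob^d_0}(L,L)$ is nontrivial.

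For~(ii), identity~(\ref{eq:detect}) shows that the image of $\mathcal{F}_{L_\R}$ contains $\mu(G_M)$, where $\mu\colon QH(M)\to QH(L_\R)$, $\mu(\alpha)=\alpha\ast[L_\R]$, is the ring homomorphism underlying the module action; since $\mu$ restricts to a group morphism on units, $\mu(G_M)$ is a subgroup of $QH(L_\R)^\ast$, and it suffices to show that $\mu$ is injective on $G_M$, for then $\mu(G_M)\cong G_M$. This is the main obstacle: it requires the quantum module structure of $QH(L_\R)$ to be pinned down. Here one invokes the explicit computation of the quantum homology of the real Lagrangian of a Fano toric manifold (cf.~\cite{Haug:realtoric}): over $\Z_2$, the algebra $QH(L_\R)$ is a ``quantum square root'' of $QH(M)$, in the sense that $\mu$ sends each facet divisor $D_i$ to the square $\delta_i^{2}$ of a corresponding real-divisor class $\delta_i\in QH(L_\R)$ and exhibits $QH(M)$ as a subring of $QH(L_\R)$; the prototype is the inclusion $QH(\C P^n)\hookrightarrow QH(\R P^n)$ carrying the hyperplane class $h$ to $x^{2}$, where $x$ is the codimension-one generator of $QH(\R P^n)$. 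In particular $\mu$ is injective, whence $\mu|_{G_M}$ is injective and the image of $\mathcal{F}_{L_\R}$ contains the subgroup $\mu(G_M)\cong G_M$. Thus part~(i) is essentially formal given diagram~(\ref{eq:main-diag}) and the McDuff--Tolman identification, with $N_L\geq 3$ exactly supplying the degree gap separating $D\ast[L]$ from $[L]$, while the genuine work in part~(ii) is the injectivity of $\alpha\mapsto\alpha\ast[L_\R]$ on $G_M$, resting on the square-root description of $QH(L_\R)$.
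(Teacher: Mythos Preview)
Your argument for part~(i) is correct and matches the paper's: $N_L\geq 3$ kills the quantum corrections in $[D_i]\ast[L]$, placing the result in the $H_{n-2}$-summand of $QH(L)$, distinct from $[L]\in H_n$.

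Part~(ii), however, has a genuine gap. You use only \emph{loops} $\mathbf{g}\in\pi_1(Ham(M))$, so the subgroup you produce in $QH(L_\R)^\ast$ is $\mu(G_M)$ with $\mu(\alpha)=\alpha\ast[L_\R]$, and you then assert that $\mu$ is injective. But Haug's result is that $\beta_\R\colon QH(L_\R)\to QH(M)$, $[x_i]\mapsto[D_i]$, is a ring \emph{isomorphism}; combined with Hyvrier's formula $[D_i]\ast[L_\R]=[x_i]^2$, the composite $\beta_\R\circ\mu$ is precisely the Frobenius $\alpha\mapsto\alpha^2$ on $QH(M;\Z_2)$, which need not be injective. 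For $M=\C P^3$ one has $G_M=\langle h\rangle\cong\Z/4$ inside $\Z_2[h]/(h^4-1)$, and squaring on $\Z/4$ has kernel $\{1,h^2\}$, so $\mu(G_M)\cong\Z/2\not\cong G_M$; your argument fails whenever $G_M$ has $2$-torsion. The paper avoids this by exploiting the full groupoid $\Pi(Ham(M))$ rather than just $\pi_1$: Hyvrier's half-loops $\mathbf{g}_i(t)=\mathbf{G}_i(t/2)$ are \emph{paths} from $\id$ to diffeomorphisms preserving $L_\R$, with Lagrangian Seidel elements $\widetilde{S}(\mathbf{g}_i)(L_\R)=[x_i]$ rather than $[x_i]^2$. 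Concatenating their suspensions and invoking the commutativity of~(\ref{eq:main-diag2}) directly (the top square is unavailable for non-loops) yields the subgroup $\langle[x_i]\rangle=\beta_\R^{-1}(G_M)\cong G_M$. The essential point you missed is that restricting to $\pi_1(Ham(M))$ costs exactly a squaring, and the half-loops are what supply the missing square roots.
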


To exemplify point ii, we immediately deduce from the definition of $G_{M}$
 that for the standard real projective space $\R P^{n}\subset \C P^{n}$
the monoid $\mor_{\cob^{d}_{0} }(\R P^{n},\R P^{n})$ contains  an element $u$
so that $\{1, u,\ldots, u^{i}, \ldots, u^{n}\}$ are pairwise distinct.

\

The proof is based on the specialization of diagram
(\ref{eq:main-diag}) to a single Lagrangian $L\in
\mathcal{L}^{\ast}_{d}$ as below:

\begin{eqnarray}\label{eq:diag-ex}
   \begin{aligned}
   \xymatrix{
     \pi_{1}(Ham(M))\ar[rrr]^{S} \ar[d]_{i}
     & &  &QH(M)^{\ast}\ar[d]^{\ast [L]}\\
   \Pi (Ham(M))\ar[d]_{\Sigma}\ar[rrr]^{\widetilde{S}(-)_{L}} & & & HF (g_{0}(L),g_{1}(L)) \ar[d]^{id}\\
    \mor_{\cob^{d}_{0}(M))}(g_{0}(L),g_{1}(L))\ar[rrr]_{\mathcal{F}_{L}}& & & HF(g_{0}(L),g_{1}(L))}
\end{aligned}
\end{eqnarray}

Here  $g_{0}$, respectively $g_{1}$,  are the ends of the path
$\mathbf{g}\in \Pi (Ham(M))$. We will only restrict to examples with
$g_{0}=id$ and $g_{1}(L)=L$. In this case $HF(g_{0}(L), g_{1}(L))$
is canonically identified with $HF(L,L)=QH(L)$ - this explains  the
vertical arrow originating in the upper-most right corner. In a
different context, Lagrangian loops of this type have been also
studied by Akveld-Salamon \cite{Ak-Sa:Loops}. The morphism
$\mathcal{F}_{L}$  associates to each cobordism $V:L\cobto L$ a
quantum homology class $\phi_{V}([L])\in HF(L,L)=QH(L)$  and was
described explicitly at the beginning of section \S\ref{sec:ftilde}.
Given that in our case $V$ is elementary, the image of
$\mathcal{F}_{L}$ is included in the invertibles of $QH(L)$.

\

Besides diagram (\ref{eq:diag-ex}),  as we shall see, the proof of Corollary \ref{cor:toric}  follows rapidly from
recent results of Haug \cite{Haug:realtoric} and Hyvrier \cite{Hy:circle}
as well as older work of McDuff-Tolman \cite{McD-Tol:circle}.
We will review the ingredients needed from these works in the first two subsections below,
\S \ref{subsec:toric-facts} and \S \ref{sec:realtoric}.
The proof of Corollary \ref{cor:toric} appears in  \S\ref{subsubsec:proof-cor}.

\

In the arguments that follow we make use of the identification
$QH(L) \cong HF(L,L)$ via the $PSS$ isomorphism and in the
calculations below we work with coefficients in the Novikov ring
$\Lambda = \mathbb{Z}_2[t, t^{-1}]$, $\deg t = -1$. This is
legitimate because we only consider a single Lagrangian at a time.
Similarly, the coefficient ring for the quantum homology of the
ambient symplectic manifold will be the subring of even powers
$\Lambda_{ev} = \mathbb{Z}_2[t^2, t^{-2}]$. In case we need to
distinguish the ungraded quantum homology from the graded one we
indicate explicitly the coefficients: $QH(M;\Z_{2})$ stands for the
ungraded version and $QH(M;\Lambda_{ev})$ stands for the graded one.
One can obviously pass from the graded version to the ungraded one
by setting $t=1$ and forgetting the degrees.

\subsubsection{Elements of toric topology}\label{subsec:toric-facts}
We review rapidly some toric basic facts
 following closely McDuff-Salamon's presentation from
\cite{McD-Sa:Jhol-2}, Chapter 11.4.  Our manifold
is endowed with an effective
Hamiltonian action of an $n$-torus and there is an associated moment
map $\mu: M \to (\mathbb{R}^n)^*$ whose image, the moment polytope,
is determined by vectors $v_i \in \mathbb{R}^n \cap \Z^n$, $i = 1,
..., d$, and real numbers $\{ a_i \}_{i=1}^d$ via
$$\Delta := \mu(M) = \{ f \in (\R^n)^* \; | \; f(v_i) \geq a_i \}.$$

The codimension one faces of $\Delta$ are assumed non-empty and are
given by
$$F_i = \{ f \in \Delta \; | \; f(v_i) = a_i \}.$$
Denote by $[D_i] \in H_{2n - 2}(M)$ the fundamental class of the
submanifold $D_i := \mu^{-1}(F_i)$.


\

The
quantum homology ring of $M$ is generated by the classes
$[D_i]$:
\begin{equation}\label{eq:toric-qhlgy}
QH(M; \Lambda_{ev}) \cong \frac{\Z_2 [ [D_1], ..., [D_d] ][t^2, t^{-2}]}{(P(\Delta), SR_Q(\Delta))}.
\end{equation}
We refer to \cite{McD-Sa:Jhol-2} for the description of the ideals
$P(\Delta)$ and $SR_Q(\Delta)$.


\

The classes $[D_{i}]$ are easily seen to be invertible. We denote by $G_{M}$ the subgroup
they generate inside the group of invertible elements $QH(M;\Z_{2})^{\ast}\subset QH(M;\Z_{2})$.

\

 The relation with Seidel's representation  is provided by a result
of McDuff-Tolman \cite{McD-Tol:circle}: there is a family of loops
$\mathbf{G}_i \in \pi_1(Ham(M))$ generated by Hamiltonians
$\mu_{v_i}: M \to \R, \quad \mu_{v_i}(m) := \mu(m)(v_i)$, whose
Seidel elements are given by
$$S(\mathbf{G}_i) = [D_i] t^{-2}.$$

For any Lagrangian $L\in \mathcal{L}^{\ast}_{d}$
we deduce from diagram (\ref{eq:diag-ex})  that
\begin{equation}\label{eq:action}
\mathcal{F}_{L}( ([0,1]\times L)^{\mathbf{G_{i}}^{-1}})=[D_{i}]\ast [L]~.~
\end{equation}

\subsubsection{Properties of $L_{\R}$}\label{sec:realtoric}
The intersection
$L_\R \cap D_i$ is non-transverse but it is a codimension one submanifold $x_i$ of
$L_\R$. Set $[x_i] := [L_\R \cap D_i] \in H_{n-1}(L_\R)$.

\

Haug \cite{Haug:realtoric} showed that there is an
isomorphism of rings that doubles the degree and is defined by
$$\beta_{\R}: QH_*(L_\R; \Lambda) \to QH_{2*}(M; \Lambda_{ev})$$
$$[x_i] \mapsto [D_i]$$
$$t \mapsto t^2.$$

The $QH(M)$-module structure on $QH(L_\R)$ has been
computed by Hyvrier \cite{Hy:circle} and is completely determined by
$$[D_i] * [L_\R] =[x_i]^2,$$
since the ring structures on $QH(M)$ and $QH(L_\R)$ are generated
respectively by $[D_i] \otimes \Lambda_{ev}$ and $[x_i] \otimes
\Lambda$. Hyvrier \cite{Hy:circle} also shows that there is a Lagrangian analogue of the loops
$\mathbf{G}_i$, given by the paths $\mathbf{g}_i(t) :=
\mathbf{G}_i(t/2)$. These paths still satisfy $\mathbf{g}_i(1)(L_\R) =
L_\R$ and he shows that the associated relative Seidel
elements are given by
$$\widetilde{S}(\mathbf{g}_i)(L_\R) = [x_i] t^{-1}.$$

\subsubsection{Proof of Corollary \ref{cor:toric}}\label{subsubsec:proof-cor}
For the first point we notice that equation (\ref{eq:action}) directly implies the claim
as soon as we show that $$[D_{i}]\ast [L]\not =[L]t^{2}\in QH(L;\Lambda)~.~$$
But in view of the assumption that $N_{L}\geq 3$ and given that $|t|=-1$, by possibly
using the pearl model for Lagrangian quantum homology, this inequality is immediate.

We proceed to show the second point.
Let $w=Y_{1}^{i_{1}}\ldots Y_{d}^{i_{d}}$ be a word written in the letters $Y_{i}$, $1\leq i \leq d$.
We view $Y_{i}$ as $Y_{i}=x_{i}t^{-1}$.
We associate to this word an element in $\mor_{\cob^{d}_{0}}(L_{\R},L_{\R})$ as follows.
We put $V_{0}=[0,1]\times L_{\R}$ and write:
$$\Xi(w)=(V_{0}^{\mathbf{g}_{d}^{-1}}) ^{\ast^{{i_{d}}}}\ast \ldots\ast (V_{0}^{\mathbf{g}_{1}^{-1}})^{\ast^{ i_{1}}}~.~$$
Here $\ast$ stands for concatenation. In short,  $\Xi(w)$ is
obtained by concatenating from right to left the suspensions
$V_{0}^{\mathbf{g}^{-1}_{i}}$ in the number and order prescribed by
the word $w$. The calculations in \S \ref{sec:realtoric}  together
with the commutativity of diagram (\ref{eq:diag-ex}) imply that
$\mathcal{F}_{L_{\R}}(\Xi(w))=[w]\in  QH(L_{\R};\Z_{2})$ where $[w]$
is now written in the $[x_{i}]$'s. Given that $QH(L_{\R})$ is
isomorphic to $QH(M)$ as a ring, we deduce that the image of
$\mathcal{F}_{L_{\R}}$ contains the subgroup
$\beta_{\R}^{-1}(G_{M})$. \qed

\subsection{The many flavors of the relative Seidel morphism}\label{subsubsec:flavors-Seidel}

We shall discuss seven different, yet homologically equivalent, definitions of the relative Seidel morphism.  
Let $\mathbf{g}$ be a path of Hamiltonian diffeomorphisms with ends $g_0$ to $g_1$. We concentrate on a single Lagrangian $L\in \mathcal{L}^{\ast}_{d}$. Each one of the seven definitions gives a different
description of the same  homology class $\in HF(g_{0}(L), g_{1}(L))$ which is called the Lagrangian (or relative) Seidel element. 
 
 \
 
\noindent A.   The first definition - originating in \cite{Se:book-fukaya-categ} - and recalled in \S \ref{subsubsec:top-funct} is defined as the element $\xi^0_{\mathbf{g}} \in CF(g_0(L), g_1(L))$  given by counting curves $u$ with a single boundary puncture mapped to an intersection point of $g_0(L)$ and $g_1(L)$, satisfying the moving boundary condition $u(e^{-2 \pi i t}) \in g_t(L)$.

\

\noindent B.  In Remark \ref{rem:movingboundarydef} we gave an alternative definition of the element at A as $\phi_{\mathbf{g}}(PSS([g_0(L)]))$,  where $\phi_{\mathbf{g}}: CF(g_0(L), g_0(L)) \to CF(g_0(L), g_1(L))$ is the moving boundary morphism.

\

\noindent C.  The chain morphism $\phi_{\mathbf{g}}$ from point B is chain homotopic
 -  as is shown in Lemma \ref{lem:moving} - to the ``cobordism'' morphism $\phi_{\Sigma(\mathbf{g})(L)}: CF(g_0(L), g_0(L)) \to CF(g_0(L), g_1(L))$ as constructed in \S \ref{sec:ftilde}.

\

\noindent D. When $g_{0}=id$, there is a definition of a morphism, chain equivalent to the one at B, based on the same type of naturality transformation as described in \S\ref{subsubsec:top-funct} but starting from Lagrangian Floer homology - with the differential counting ``semitubes''. This morphism was used
 in \cite{Leclercq:spectral}.

\

\noindent E.  Given a Morse-Smale function $f: L \to \R$, we denote by $\mathcal{C}(f)$ the associated pearl complex (we omit the almost complex structure and the Riemannian metric from the notation) \cite{Bi-Co:rigidity}.  Assuming that $g_0= \id$ and $g_1(L) = L$, one then defines $\widetilde{S}_\mathcal{P}(\mathbf{g}): \mathcal{C}(f) \to \mathcal{C}(f)$ by counting pearly trajectories with the property that exactly one J-holomorphic disc verifies a moving boundary condition $u(e^{-2 \pi i t}) \in g_{a(t)}(L)$.  The function $a:[0,1] \to [0,1]$ is monotone, satisfies $a|_{[0, 5/8]} = 0$ and $a|_{[7/8, 1]} = 1$ (this reparametrization is chosen so that the disc moves along $g_t(L)$ on its upper half part).  It was shown in \cite{Cha:thesis} that this morphism is chain-homotopic to the morphism at point B.

\

\noindent F.  Still assuming that $g_0 = \id$ and $g_1(L) = L$, Hu and Lalonde \cite{Hu-La:Seidel-morph} considered a Hamiltonian fibration $M \to P \to D^2$, defined using $\mathbf{g}$, in which the mapping torus $N:= \{ g_t(L) \}$ appears as a Lagrangian submanifold.  The restriction of this fibration to $\partial D^2$ gives by construction $L \to N \to S^1$.  By using holomorphic sections $(D^2, S^1) \to (P, N)$, they defined a morphism $\mathcal{C}(f) \to \mathcal{C}(f)$ that induces an isomorphism in Lagrangian quantum homology.  It was also shown in \cite{Cha:thesis} that this morphism coincides in homology with $[\widetilde{S}_{\mathcal{P}}(\mathbf{g})]$.  The proof of this fact uses a result of \cite{Ak-Sa:Loops}, in which they prove that such holomorphic sections  correspond to disks in $M$ with moving boundary condition $u(e^{-2 \pi i t}) \in g_t(L)$, satisfying a Hamiltonian perturbation of the $\overline{\partial}_J$-equation.  One can then count pearly trajectories using exactly one such disc, and a homotopy sending the perturbation to zero shows that this gives the same count as $\widetilde{S}_{\mathcal{P}}(\mathbf{g})$ in homology.

\

\noindent G. Finally, when $\mathbf{g}$ is a loop based at the identity, the Lagrangian Seidel element
described before descends from the Seidel element  of $\mathbf{g}$ in $M$ via the quantum module action
so that, using for instance the description at B (which is possibly the simplest), we have:
$$\phi_{\mathbf{g}}([L])= S(\mathbf{g})\ast [L]~.~$$
The identity D=F was first shown in \cite{Hu-La-Le:monodromy}.  It is obviously reflected in the commutativity of the top of the diagram (\ref{eq:main-diag}).

\bibliographystyle{alpha}
\bibliography{bibliography}

\end{document}